\declaretheorem[name=Lemma,numberwithin=section]{lemm}
\newtheorem{theorem}{Theorem}[section]
\newtheorem{proposition}[theorem]{Proposition}
\newtheorem{lemma}[theorem]{Lemma}
\theoremstyle{definition}
\newtheorem{definition}[theorem]{Definition}
\newtheorem*{remark*}{Remark}
\newtheorem*{theorem*}{Theorem}
\numberwithin{equation}{section}
\newcommand{\R}{\mathbb{R}}
\newcommand{\N}{\mathbb{N}}
\newcommand{\Z}{\mathbb{Z}}
\newcommand{\length}{\mathrm{length}}
\DeclareMathOperator{\dmn}{domain}
\newcommand{\red}[1]{\textcolor{red}{#1}}
\newcommand{\purple}[1]{\textcolor{purple}{#1}}
\newcommand{\nl}{\newline}
\newcommand{\RR}{\mathbb{R}}
\renewcommand{\P}{\mathbb{P}}
\newcommand{\met}{\text{Met}}
\newcommand{\cS}{\mathcal{S}}
\title{The p-widths of $\mathbb{R} \mathbb{P}^2$}
\author{Jared Marx-Kuo}
\begin{document}

\begin{abstract}
We compute the p-widths, $\{\omega_p\}$, for the real projective plane with the standard metric.
\end{abstract}
\maketitle
\tableofcontents
\section{Introduction}
%\red{talk about p-widths generally. Yau's conjecture, etc. Talk about chodosh-mantoulidis. Talk about Ambrozio--Marques--Neves' work, and the rigidity of $\R \P^2$. Talk about how the p-widths for all $p$ has only been computed in 2 cases: on $S^2$, by Cho-Man, and then for Zoll metrics, by me. We now compute another example $\R \P^2$.} \newline 
%
In \cite{gromov2006dimension}, Gromov introduced the \textit{$p$-widths}, $\{\omega_p\}$, of a Riemannian manifold, $(M^{n+1}, g)$, as an analogue of the spectrum of the Laplacian, $\{\lambda_k\}$, the discrete collection of eigenvalues. Intuitively, one replaces the Rayleigh quotient in the definition of eigenvalues with the area functional in the definition of $\omega_p$ - see \cite{gromov2010singularities} \cite{gromov2002isoperimetry} \cite{guth2009minimax} \cite{fraser2020applications} for more background. The $p$-widths have proven to be extremely useful due to the Almgren--Pitts/Marques--Neves Morse theory program for the area functional (see \cite{almgren1965theory}\cite{pitts2014existence}\cite{marques2014min} \cite{marques2023morse} \cite{marques2015morse} \cite{marques2021morse} \cite{almgren1962homotopy} \cite{zhou2020multiplicity}). In \cite{liokumovich2018weyl}, Liokumovich--Marques--Neves demonstrated that $\{\omega_p\}$ satisfy a Weyl law, which later led to a resolution of Yau's conjecture \cite{yau1982problem}: Any closed three-dimensional manifold must contain an infinite number of immersed minimal surfaces. In some sense, the resolution was stronger than expected:
\begin{theorem}[\cite{chodosh2020minimal} \cite{song2018existence} \cite{marques2019equidistribution} \cite{marques2017existence} \cite{irie2018density} \cite{zhou2020multiplicity}] \label{yauThm}
On any closed manifold $(M^{n+1}, g)$ with $3 \leq n + 1 \leq 7$, there exist infinitely many embedded minimal hypersurfaces.
\end{theorem}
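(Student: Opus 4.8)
The plan is to argue by contradiction, using the Weyl law to manufacture minimal hypersurfaces of unbounded area. Suppose $(M^{n+1},g)$ with $3\le n+1\le 7$ contained only finitely many closed embedded minimal hypersurfaces $\Gamma_1,\dots,\Gamma_N$. First I would invoke the Almgren--Pitts/Marques--Neves min-max construction \cite{almgren1965theory,pitts2014existence,marques2014min}: for every $p$ the width $\omega_p$ equals the mass of a stationary integral varifold whose support is a closed minimal hypersurface, and the regularity theory in min-max (Pitts, Schoen--Simon) makes this hypersurface smooth and embedded exactly because $n+1\le 7$, so that its singular set, of codimension at least $7$, is empty. Hence $\omega_p=\sum_{j=1}^N m^{(p)}_j\,|\Gamma_j|$ for integers $m^{(p)}_j\ge 0$, i.e. $\{\omega_p\}$ is contained in the finitely generated sub-semigroup $\Lambda=\{\sum_j m_j|\Gamma_j| : m_j\in\Z_{\ge 0}\}$ of $[0,\infty)$. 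On the other hand the Weyl law of Liokumovich--Marques--Neves \cite{liokumovich2018weyl} gives $\omega_p = a(n)\,\vol(M,g)^{1/(n+1)}\,p^{1/(n+1)} + o\!\left(p^{1/(n+1)}\right)$, and in particular $\omega_p\to\infty$.

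The argument then splits according to whether the multiplicities $m^{(p)}_j$ remain bounded. If $g$ is bumpy -- a $C^\infty$-generic property by White's bumpy metrics theorem -- the multiplicity-one theorem of Zhou \cite{zhou2020multiplicity} forces every min-max hypersurface to be two-sided with multiplicity one, so that $\omega_p\in\{|\Gamma_1|,\dots,|\Gamma_N|\}$, a finite set, contradicting $\omega_p\to\infty$. This already yields the theorem for generic metrics; equivalently one can run the Irie--Marques--Neves density argument \cite{irie2018density}: were some open set $U$ disjoint from every minimal hypersurface, enlarging $g$ on a small ball inside $U$ would strictly increase $\vol(M,g)$, hence shift $\omega_p$ for all large $p$ by the Weyl law, while each realizing hypersurface avoids $U$ and so keeps its area fixed -- a contradiction -- from which one extracts infinitely many minimal hypersurfaces for an open dense set of metrics.

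The genuinely delicate case is a non-bumpy metric, where the min-max varifolds may be large integer multiples of a single $\Gamma_j$ and mere membership in $\Lambda$ is no contradiction. Here I would follow Song \cite{song2018existence}: choose a minimal hypersurface $\Gamma\subset M$, pass to the noncompact manifold $C(M,\Gamma)$ obtained by attaching cylindrical ends $\Gamma\times[0,\infty)$ along the components of $\Gamma$, prove a Weyl-type growth bound for a suitable relative min-max spectrum on $C(M,\Gamma)$, and induct on $N$: the relative widths are unbounded, so min-max produces a minimal hypersurface in $C(M,\Gamma)$ whose push-forward to $M$ is an embedded minimal hypersurface distinct from $\Gamma_1,\dots,\Gamma_N$, contradicting finiteness. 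When $n+1=3$ there is the alternative route of Chodosh--Mantoulidis \cite{chodosh2020minimal} through the Allen--Cahn min-max, whose phase-transition regularity yields multiplicity one and sharp index bounds directly; the equidistribution refinement is due to Marques--Neves--Song \cite{marques2019equidistribution}.

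I expect the main obstacle to be precisely this last step: making the volume-scaling heuristic rigorous for a fixed, possibly non-generic metric that cannot be perturbed. One is then forced into the cylindrical min-max construction on $C(M,\Gamma)$ and into verifying two delicate points -- that the relative Weyl law genuinely forces the localized widths to diverge, and that the varifold produced carries new mass rather than being a ghost supported on $\Gamma$ with higher multiplicity. Ruling out this concentration of mass on the already-known hypersurfaces, i.e. recovering multiplicity control in the absence of bumpiness, is where essentially all the work of \cite{song2018existence} (and, in dimension three, of \cite{chodosh2020minimal}) is concentrated.
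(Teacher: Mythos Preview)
The paper does not prove this theorem; it merely states it with citations as background and motivation for the main result on $\RP^2$. There is therefore no ``paper's own proof'' to compare your proposal against. Your sketch is a reasonable high-level summary of the arguments in the cited works (Weyl law plus min-max regularity for the generic case \cite{irie2018density,marques2017existence}, Song's cylindrical construction \cite{song2018existence} for arbitrary metrics, multiplicity-one \cite{zhou2020multiplicity,chodosh2020minimal}), but since the present paper only quotes the result, no proof was expected here.
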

\noindent We refer the reader to the introduction of \cite{marx2024isospectral} for more information. In theorem \ref{yauThm}, the dimension upper bound is due to the existence of singular minimal hypersurfaces when $n+1 \geq 8$, while the lower bound is due to the fact that min-max on surfaces may only produce stationary geodesic networks in general (see \cite{pitts1973net}, \cite[Remark 1.1]{marques2015morse}). \newline 
\indent In a recent breakthrough \cite{chodosh2023p}, Chodosh--Mantoulidis used the Allen--Cahn equation to prove that the $p$-widths can always be achieved by a union of closed geodesics:
\begin{theorem}[\cite{chodosh2023p}, Thm 1.2] \label{ChoManPWidths}
	Let $(M^2, g)$ be a closed Riemannian manifold. For every $p \in \Z^+$, there exists a collection of primitive, closed geodesics $\{\sigma_{p,j}\}$ such that 
    \begin{equation} \label{widthsAchievedEq}
	\omega_p(M, g) = \sum_{j = 1}^{N(p)} m_j \cdot L_g(\sigma_{p,j}), \qquad \qquad m_j \in \Z^+
	\end{equation}
\end{theorem}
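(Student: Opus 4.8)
Since Almgren--Pitts min-max in dimension two produces only stationary geodesic \emph{networks} in general (as noted above via \cite{pitts1973net}), the plan is to run the construction through the Allen--Cahn (phase-transition) functional, whose compactness and regularity theory is sharper for surfaces. Fix a smooth double-well potential $W\geq 0$ with nondegenerate minima at $\pm1$, set $\sigma_0=\int_{-1}^1\sqrt{2W(t)}\,dt$, and for $\eps>0$ consider
\[
E_\eps(u)=\int_M\Big(\tfrac{\eps}{2}|\nabla u|^2+\tfrac1\eps W(u)\Big)\,d\vol_g,\qquad u\in W^{1,2}(M).
\]
Using exactly the $\mathbb{Z}_2$-cohomological $p$-sweepout structure that defines $\omega_p$ (now on $W^{1,2}(M)\setminus\{0\}$), one obtains the Allen--Cahn $p$-width $c_\eps(p)$ as the corresponding min-max value of $E_\eps$. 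The first step is the identification
\[
\omega_p(M,g)=\frac{1}{2\sigma_0}\lim_{\eps\to0}c_\eps(p),
\]
where ``$\leq$'' comes from sweeping out with level sets of a fixed sweepout together with the standard energy/length comparison, and ``$\geq$'' comes from the interface-convergence estimates (à la Gaspar--Guaraco, and the Allen--Cahn/Almgren--Pitts comparison), which forces the weak limit of the min-max interfaces to recover $\omega_p$.

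Next, for each $\eps>0$ invoke the Allen--Cahn min-max existence theory (Guaraco; Gaspar--Guaraco; Hutchinson--Tonegawa; Tonegawa--Wickramasekera) to produce a critical point $u_\eps$ of $E_\eps$ with $E_\eps(u_\eps)=c_\eps(p)$ and Morse index $\leq p$. I would then pass to the limit $\eps\to0$: the energy-density measures $\mu_\eps:=(2\sigma_0)^{-1}\big(\tfrac{\eps}{2}|\nabla u_\eps|^2+\tfrac1\eps W(u_\eps)\big)\,d\vol_g$ converge subsequentially to the weight measure of a stationary integral $1$-varifold $V$ on $M$, and since $M$ is closed the total mass passes to the limit, giving $\|V\|(M)=(2\sigma_0)^{-1}\lim c_\eps(p)=\omega_p(M,g)$. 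The index bound $\leq p$ implies $V$ is stable away from at most $p$ points, so by Wickramasekera's regularity theorem (equivalently the curvature estimates for stable Allen--Cahn interfaces) $V$, restricted to the complement of these finitely many points, is a locally finite union of embedded geodesic arcs carrying positive integer multiplicities; a separate macroscopic (non-$\eps$) analysis along each such arc shows the multiplicity is locally constant and comes from $C^\infty$-sheeted convergence.

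The crux is to rule out junctions at the finitely many exceptional points, upgrading $V$ to a junction-free union of \emph{closed} geodesics. Here I would blow $u_\eps$ up at such a point at the $\eps$-scale: the rescalings converge to an entire solution $u_\infty$ of $\Delta u_\infty=W'(u_\infty)$ on $\mathbb{R}^2$ of \emph{finite} Morse index ($\leq p$). The key input is the two-dimensional classification: every finite-Morse-index entire solution on $\mathbb{R}^2$ is one-dimensional, i.e.\ up to a rigid motion and $u\mapsto-u$ it is constant or the $1$-D heteroclinic profile, with nodal set a single straight line. In particular the scalar two-well structure already forbids an odd junction (only two phases), and the ``saddle'' solution with an $\times$-shaped nodal set is excluded because it has infinite Morse index. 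Hence no triple or higher junctions occur, the embedded geodesic arcs of $V$ close up smoothly, and $V=\sum_j\widetilde m_j\,[\Gamma_j]$ for finitely many smooth closed geodesics $\Gamma_j$. Writing each $\Gamma_j$ as an integer multiple of a primitive closed geodesic $\sigma_{p,j}$ and collecting the multiplicities into $m_j\in\Z^+$ gives
\[
\omega_p(M,g)=\|V\|(M)=\sum_{j=1}^{N(p)}m_j\,L_g(\sigma_{p,j}),
\]
which is \eqref{widthsAchievedEq}.

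I expect the main obstacle to be this last step --- the finite-index classification on $\mathbb{R}^2$ that kills junctions --- together with the two-directional comparison $\omega_p=\tfrac{1}{2\sigma_0}\lim_\eps c_\eps(p)$; the intervening points (min-max existence with index bound, interface convergence, Wickramasekera regularity) are assembled from existing theory. A secondary subtlety worth flagging is confirming that the limit geodesics are the \emph{primitive} ones and that all multiplicities are accounted for by the $m_j$, i.e.\ that no extra mass is lost in the varifold limit.
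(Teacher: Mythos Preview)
The present paper does not prove this theorem; it is quoted verbatim from \cite{chodosh2023p} (their Theorem~1.2) and used only as a black box, for instance in the proof of Lemma~\ref{lem:perturbedMetrics}. There is therefore no ``paper's own proof'' to compare your proposal against.

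That said, your outline is a faithful high-level sketch of the Chodosh--Mantoulidis argument: run $p$-parameter min-max through the Allen--Cahn functional to produce critical points $u_\eps$ of Morse index $\leq p$, pass the energy measures to a stationary integral $1$-varifold $V$ with $\|V\|(M)=\omega_p$, and rule out junctions by blowing up at the $\eps$-scale and invoking the classification of finite-Morse-index entire solutions of $\Delta u=W'(u)$ on $\RR^2$. You have also correctly isolated the two substantive ingredients --- the width comparison $\omega_p=(2\sigma_0)^{-1}\lim_{\eps\to 0} c_\eps(p)$ in ambient dimension two, and the $\RR^2$ classification that forbids junctions. One inaccuracy worth flagging: ``Wickramasekera's regularity theorem'' is not the right tool here, since that theory applies to codimension-one stable varifolds in ambient dimension $\geq 3$; on a surface the relevant input is instead the curvature and sheeting estimates for stable Allen--Cahn solutions in two dimensions (Wang--Wei, and the refinements in \cite{chodosh2023p} itself), which yield $C^\infty$ multigraph convergence to geodesic arcs away from finitely many points.
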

\noindent Here, ``primitive" means that the geodesic is connected and transversed with multiplicity one. The authors posed several questions about index and the number of intersections between geodesics in equation \eqref{widthsAchievedEq} in relation to $p$, and these were partially answered in \cite[Thm 1.3]{marx2024index}. Chodosh--Mantoulidis also showed:
\begin{theorem}[\cite{chodosh2023p}, Thm 1.4] \label{SphereWidths}
For $g_0$ the unit round metric on $S^2$
\[
\forall p \in \N, \qquad \omega_p(S^2, g_0) = 2 \pi \lfloor \sqrt{p} \rfloor
\]
\end{theorem}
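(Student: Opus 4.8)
The plan is to prove the two inequalities $\omega_p(S^2,g_0) \le 2\pi\lfloor\sqrt p\rfloor$ and $\omega_p(S^2,g_0) \ge 2\pi\lfloor\sqrt p\rfloor$ separately, the second using Theorem~\ref{ChoManPWidths} in an essential way.

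For the upper bound I would use explicit sweepouts by nodal sets of low-degree spherical harmonics. Let $V_k$ be the space of restrictions to $S^2$ of polynomials on $\R^3$ of degree $\le k$, so $\dim V_k = \sum_{j=0}^{k}(2j+1) = (k+1)^2$, and consider $\Phi_k\colon \P(V_k) = \RP^{(k+1)^2-1} \to \mathcal{Z}_1(S^2;\Z_2)$, $[P]\mapsto \partial\{P>0\} = \{P=0\}$, which is continuous in the flat topology. Two facts pin it down: (i) restricted to a pencil $[\cos\theta\,P_0 + \sin\theta\,P_1]$ (with $P_0$ a nonzero constant and $P_1$ a linear form) $\Phi_k$ is a loop sweeping out $S^2$ with odd multiplicity, so $\Phi_k^*\bar\lambda\neq 0$ and hence $\Phi_k^*\bar\lambda^{(k+1)^2-1}\neq 0$, i.e.\ $\Phi_k$ is a $((k+1)^2-1)$-sweepout; (ii) the restriction of a degree-$\le k$ polynomial to a great circle is a trigonometric polynomial of degree $\le k$, hence has at most $2k$ zeros, so by the Crofton formula on $S^2$ (normalized as $\mathrm{length}(\gamma) = \pi\cdot\mathbb{E}_G[\#(\gamma\cap G)]$ over great circles $G$, calibrated on one great circle) every nodal set $\{P=0\}$ has length at most $2\pi k$. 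Thus $\omega_{(k+1)^2-1}(S^2,g_0)\le 2\pi k$, and since $p\mapsto\omega_p$ is nondecreasing, for every $p$ with $k^2\le p\le (k+1)^2-1$ (equivalently $\lfloor\sqrt p\rfloor=k$) we get $\omega_p\le 2\pi k = 2\pi\lfloor\sqrt p\rfloor$.

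For the lower bound I would invoke Theorem~\ref{ChoManPWidths}: every primitive closed geodesic of $(S^2,g_0)$ is a great circle of length $2\pi$, so $\omega_p(S^2,g_0) = 2\pi N_p$ for some $N_p\in\Z^+$. It then suffices to prove $\omega_{N^2}(S^2,g_0)\ge 2\pi N$ for every $N\in\Z^+$, since monotonicity upgrades this to $\omega_p\ge 2\pi\lfloor\sqrt p\rfloor$ for all $p$. The case $N=1$ is immediate because $\omega_1>0$ lies in $2\pi\Z^+$. For $N\ge 2$ I would argue by contradiction: if $\omega_{N^2}=2\pi m$ with $m\le N-1$, then by the index--nullity information attached to min-max critical configurations (the Marques--Neves bounds, in the Allen--Cahn form used in \cite{chodosh2023p}) the value $\omega_{N^2}$ is attained by a stationary configuration $V$ — a union of great circles with multiplicities and total length $2\pi m$ — with $N^2\le \mathrm{index}(V)+\mathrm{nullity}(V)$. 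The contradiction would follow from the geometric estimate that any such configuration of total length $2\pi m$ satisfies $\mathrm{index}(V)+\mathrm{nullity}(V)\le m^2+2m\le (N-1)^2+2(N-1)=N^2-1$; the upper-bound sweepouts show the bound $m^2+2m$ is sharp, attained by $m$ great circles in general position.

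The main obstacle is exactly this index-plus-nullity estimate for unions of great circles. Establishing it means analyzing the Jacobi operator $-\tfrac{d^2}{ds^2}-1$ on the graph cut out by the great circles, with the balancing/matching conditions at the (generically four-valent) crossing points, and counting its nonpositive eigenvalues; when circles are traversed more than once the multiplicity-weighted count must be run through the corresponding Allen--Cahn stability operator as in \cite{chodosh2023p}. I expect the general-position case ($m$ distinct great circles, hence $m(m-1)$ double points) to carry the bulk of the work, with concurrent or multiply-covered configurations handled by perturbation and semicontinuity. An alternative route for the lower bound would be to bound the $\Z_2$-cup-length (with respect to $\bar\lambda$) of the sublevel set $\{c\in\mathcal{Z}_1(S^2;\Z_2): \mathbf{M}(c)<2\pi N\}$ — viewed as a space of finite-perimeter regions modulo complementation — by $N^2-1$, via an isoperimetric or curve-shortening deformation onto a finite-dimensional model; this replaces the eigenvalue count by a delicate geometric-measure-theoretic retraction argument. (Note that the Weyl law of \cite{liokumovich2018weyl} together with monotonicity and the upper bound is \emph{not} by itself enough to pin down $N_p=\lfloor\sqrt p\rfloor$, so some input of the above type is genuinely needed.)
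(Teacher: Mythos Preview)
Your upper bound via degree-$k$ polynomial sweepouts and the Crofton formula is exactly the argument Chodosh--Mantoulidis use. Your lower bound, however, is not their route: they never attempt an index$+$nullity estimate for great-circle configurations on the round sphere. Instead they perturb $g_0$ to a nearby ellipsoid metric $g_\mu$ on which (i) every short closed geodesic is an iterate of one of the three principal ellipses, with lengths $2\pi$, $2\pi+2\mu$, $2\pi+4\mu$, and (ii) every short stationary $1$-varifold is nondegenerate. Nondegeneracy feeds a Lusternik--Schnirelmann argument giving strict monotonicity $\omega_p(g_\mu)<\omega_{p+1}(g_\mu)$ for $p$ in the relevant range; since each $\omega_p(g_\mu)$ must lie in the finite set $\{2\pi n_1 + (2\pi+2\mu)n_2 + (2\pi+4\mu)n_3 : n_i\in\N\}\cap(0,2\pi k+1]$, and a direct count shows that set has exactly $(k+1)^2-1$ elements, the widths $\omega_1,\ldots,\omega_{(k+1)^2-1}$ exhaust it. Reading off the values and sending $\mu\to 0$ yields $\omega_p(g_0)=2\pi\lfloor\sqrt p\rfloor$. (This is exactly the template the present paper adapts to $\R\P^2$ in \S\ref{lemmasSection}--\S\ref{ProofMain}.)

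The obstacle you flag is real and is precisely what the perturbation-and-counting trick sidesteps. On $(S^2,g_0)$ every great circle has nullity $2$, so the critical set is a degenerate manifold and the min-max Morse inequalities you want (in particular the lower bound $p\le \mathrm{index}+\mathrm{nullity}$) are delicate to justify; for configurations carrying multiplicity, the Allen--Cahn second variation does not simply reduce to a Jacobi eigenvalue count on the underlying geodesic network. Your proposed bound $\mathrm{index}+\mathrm{nullity}\le m^2+2m$ for a length-$2\pi m$ configuration may well be true, but establishing it is a separate and harder problem than what the actual proof requires.
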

\noindent Notably, theorem \ref{SphereWidths} was the first example of a manifold, $(M, g)$, for which $\{\omega_p(M, g)\}$ was explicitly known for all $p$. The authors also computed explicit sweepouts which saturate $\omega_p$. In \cite{marx2024isospectral}, the author showed that a connected component of Zoll metrics on $S^2$ have the same values of $\{\omega_p\}$:
\begin{theorem} \label{IsoSpectralThm}
Let $g$ be any Zoll metric on $S^2$ which lies in the connected component of $g_{0}$ in the space of Zoll metrics on $S^2$. Then $\omega_p(g) = \omega_p(g_{0}) = 2 \pi \lfloor \sqrt{p} \rfloor$ for all $p$.
\end{theorem}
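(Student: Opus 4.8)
The plan is to deduce the statement from three ingredients: the structure theorem for surface $p$-widths (Theorem~\ref{ChoManPWidths}), the defining property of a Zoll metric, and the continuity of $g \mapsto \omega_p(g)$. Throughout I will use the normalization — standard in this setting, and implicit in the phrase ``the space of Zoll metrics'' — under which a Zoll metric on $S^2$ is one all of whose geodesics are simple closed curves of length $2\pi$ (equivalently, the geodesic flow on the unit tangent bundle is $2\pi$-periodic). The round metric $g_{0}$ is of this type, and by Guillemin's work the set of such metrics is, near $g_{0}$, a smooth Fr\'echet submanifold of the space of metrics and in particular locally path-connected; hence the connected component of $g_{0}$ is path-connected, and for any Zoll metric $g$ in that component there is a continuous (indeed smooth) path $\{g_t\}_{t\in[0,1]}$ of Zoll metrics with $g_{0}$ the round metric and $g_1=g$.

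\textbf{Step 1: widths of a Zoll metric are quantized.} Let $g$ be any Zoll metric on $S^2$. Every primitive closed geodesic of $g$ is a prime closed geodesic and hence, by the definition of a Zoll metric, has $g$-length exactly $2\pi$. Feeding this into Theorem~\ref{ChoManPWidths}, for every $p$ we get
\[
\omega_p(S^2,g) = \sum_{j=1}^{N(p)} m_j\, L_g(\sigma_{p,j}) = 2\pi \sum_{j=1}^{N(p)} m_j \in 2\pi\,\Z^+ ,
\]
so the $p$-width of every Zoll metric lies in the discrete subset $2\pi\Z^+ = \{2\pi,4\pi,6\pi,\dots\}$ of $(0,\infty)$.

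\textbf{Step 2: continuity plus connectedness.} Along the path $\{g_t\}$ above, $t \mapsto \omega_p(S^2,g_t)$ is continuous, since the $p$-width depends continuously on the metric — it is pinched between fixed multiples of itself under a bilipschitz change of metric, cf.\ the elementary properties of the volume spectrum used in \cite{liokumovich2018weyl} and \cite{chodosh2023p}. By Step~1 this continuous function takes values in $2\pi\Z^+$, and $[0,1]$ is connected, so it is constant; therefore $\omega_p(S^2,g) = \omega_p(S^2,g_{0})$, which equals $2\pi\lfloor\sqrt{p}\rfloor$ by Theorem~\ref{SphereWidths}. As $p$ was arbitrary, this gives Theorem~\ref{IsoSpectralThm}.

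\textbf{Where the difficulty lies.} The computations here are essentially trivial; the real content is making Step~1 legitimate along the entire deformation, i.e.\ knowing that the common geodesic length stays equal to $2\pi$. This is a genuine normalization point: the path $t\mapsto (1+t)^2 g_{0}$ consists of Zoll metrics along which $\omega_p$ is \emph{not} constant, so the statement is only correct for the space of Zoll metrics normalized so that geodesics have length $2\pi$ — equivalently, one must know that the common geodesic length is a deformation invariant of the space under consideration. Beyond that, the only thing to cite with care is the continuity of $\omega_p$ in the metric, which is standard but should be quoted in the precise form needed; any topology in which the path $\{g_t\}$ is continuous (e.g.\ $C^\infty$) suffices.
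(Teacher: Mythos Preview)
Your argument is correct, but note that the present paper does not actually prove Theorem~\ref{IsoSpectralThm}: it is quoted in the introduction as a result of \cite{marx2024isospectral}, with no proof given here. So there is no ``paper's own proof'' to compare against in this document.

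That said, your three-step argument (quantization of $\omega_p$ via Theorem~\ref{ChoManPWidths} plus the Zoll length condition, continuity of $g\mapsto\omega_p(g)$, and path-connectedness of the component) is exactly the natural approach and is sound. Two small remarks. First, the paper explicitly adopts the normalization you flag as essential --- see the sentence following Theorem~\ref{IsoSpectralThm}, which defines Zoll metrics as those with all geodesics simple and of length $2\pi$ --- so your ``where the difficulty lies'' paragraph is well placed but is addressing a convention the paper has already fixed. Second, for the continuity of the $p$-widths you can cite \cite[Lemma~2.1]{irie2018density}, as the paper itself does in the proof of Lemma~\ref{lem:perturbedMetrics}; this is cleaner than the bilipschitz pinching remark. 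The passage from connectedness to path-connectedness via Guillemin's local description of Zoll metrics is the one genuinely nonformal step, and you handle it appropriately.
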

\noindent Zoll metrics on $S^2$ are those for which all geodesics (with multiplicity one) are simple and of length $2 \pi$ (note this choice of length, as opposed to any constant $c$, agrees with \cite{guillemin1976radon}). Such metrics were first constructed by Zoll \cite{zoll1901ueber}, and we refer the reader to \cite{guillemin1976radon} \cite{funk1911flachen} \cite{gromoll1981metrics} \cite{besse2012manifolds} for a by no means complete list of background sources. \newline 
\indent In related work, Ambrozio--Marques--Neves \cite{ambrozio2021riemannian}constructed Zoll families of metrics on $S^n$, i.e. metrics which admit n-parameter families of $(n-1)$-hypersurfaces as the canonical family of equators. In \cite{ambrozio2024rigidity}, the same authors show that $(\R \P^2, g_{std})$ is p-width spectrally \textit{rigid}, i.e.
\begin{theorem}[Thm A, \cite{ambrozio2024rigidity}]
Let $(M^{n+1}, g)$ a compact riemannian manifold. If 
\[
\omega_p(M, g) = \omega_p(\R \P^2, g_{std})
\]
for all $p \in \mathbb{N}^+$, then $(M, g)$ is isometric to $(\R \P^2, g_{std})$.
\end{theorem}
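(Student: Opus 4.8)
The plan is to combine three global ingredients — the Liokumovich--Marques--Neves Weyl law, the Chodosh--Mantoulidis structure theorem (Theorem~\ref{ChoManPWidths}), and the index/Lusternik--Schnirelmann refinement of Almgren--Pitts min-max — to turn the hypothesis ``$\omega_p(M,g)=\omega_p(\RP^2,g_{std})$ for all $p$'' into enough rigid geometry to recognize $(M,g)$. First I would run the Weyl law: it gives $\omega_p(M^{n+1},g)\,p^{-n/(n+1)}\to a(n)\,\vol(M,g)^{1/(n+1)}$ for a positive dimensional constant $a(n)$, so matching the right-hand side with that of $(\RP^2,g_{std})$ forces first (comparing the growth exponent $n/(n+1)$) that $n+1=2$ and then (comparing the limiting constant) that $\mathrm{Area}(M,g)=\mathrm{Area}(\RP^2,g_{std})=2\pi$. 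Hence $(M,g)$ is a closed surface of area $2\pi$.

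Next I would extract geometry from the widths themselves. One has $\omega_1(\RP^2,g_{std})=\pi$ by a direct computation — the space of mod-$2$ $1$-cycles is weakly homotopy equivalent to $\RP^\infty$, and the $\omega_1$ min-max is realized on $(\RP^2,g_{std})$ by a projective line, every one of which has length $\pi$ — so $\omega_1(M,g)=\pi$. By Theorem~\ref{ChoManPWidths} each $\omega_p(M,g)$ is a positive-integer combination of lengths of primitive closed geodesics of $(M,g)$, and at every index where $\omega_p<\omega_{p+1}$ the realizing stationary geodesic network is a nondegenerate critical configuration whose Morse index and nullity are pinched between $p$ and $p+1$ (via the index bounds of Marques--Neves and Chodosh--Mantoulidis, cf.\ \cite{marx2024index}, \cite{marques2021morse}). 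Forcing this whole package — values, indices, nullities, multiplicities — to coincide with that of $(\RP^2,g_{std})$, I would argue that the shortest closed geodesic of $(M,g)$ has length exactly $\pi$, that every min-max network is a multiple cover of a simple closed geodesic of length $\pi$, and that such geodesics occur in a $2$-parameter family; in particular $(M,g)$ has the homotopy type of $\RP^2$ and all of its geodesics are simple closed of length $\pi$, i.e.\ $g$ is a $P_\pi$-metric on $\RP^2$.

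The endgame is then classical: a metric on $\RP^2$ all of whose geodesics are closed of a single length is, up to scale, the round one (Green's solution of the Blaschke conjecture for $\RP^2$), and the normalization $\mathrm{Area}(M,g)=2\pi$ fixes the scale, so $(M,g)$ is isometric to $(\RP^2,g_{std})$. (An alternative finish is a Li--Yau/Hersch-type equality argument tying $\omega_1$ and the area of $\RP^2$ to its round metric.)

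The main obstacle is the middle step: the hypothesis only supplies a \emph{sequence of numbers}, so the entire burden is to squeeze geometric rigidity out of it. Concretely, the hard points are (i) propagating from the \emph{finitely many} closed geodesics that appear in min-max values to \emph{all} geodesics being simple closed of length $\pi$ — the Weyl law controls only the asymptotic counting function of geodesic lengths, so this uniformity must come from the sub-leading ``staircase'' of $\{\omega_p(\RP^2,g_{std})\}$ — and (ii) excluding exotic competitors (other genera, the Klein bottle, a non-round $S^2$) whose geodesic lengths happen to be commensurable with $\pi$, which again requires the precise jump pattern rather than soft invariants. This is exactly why a closed-form evaluation of $\{\omega_p(\RP^2,g_{std})\}$ — the subject of the present paper — makes the argument cleanest.
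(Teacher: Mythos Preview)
This statement is not proved in the present paper; it is quoted from \cite{ambrozio2024rigidity} as motivation in the introduction, and no argument for it appears here. There is therefore nothing in this paper to compare your proposal against.

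That said, your proposal contains a factual error that would derail the argument as written: $\omega_1(\RP^2,g_{std})$ is not $\pi$ but $2\pi$. Elements of $\mathcal{Z}_1(\RP^2;\Z_2)$ are mod-$2$ \emph{boundaries}, whereas a single projective line represents the nonzero class in $H_1(\RP^2;\Z_2)$ and hence is not an admissible cycle. This is exactly the ``even total multiplicity'' phenomenon highlighted in the paper (Lemma~\ref{lem:perturbedMetrics}, part~\ref{enum:containment}), and it is why Theorem~\ref{mainTheorem} gives $\omega_1=2\pi\lfloor\tfrac14(1+\sqrt{9})\rfloor=2\pi$. Consequently your step ``the shortest closed geodesic of $(M,g)$ has length exactly $\pi$'' does not follow from $\omega_1$: the widths only see geodesic configurations of even total multiplicity, so a priori only the value $2\pi$ is visible, and distinguishing ``one primitive geodesic of length $2\pi$'' from ``a doubled geodesic of length $\pi$'' requires an additional argument you have not supplied. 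You are candid that the middle step --- passing from the numerical sequence $\{\omega_p\}$ to ``every geodesic is simple closed of length $\pi$'' --- is the obstacle, and this is precisely where it bites; the index/nullity bounds you invoke constrain the \emph{realizing} configurations but do not by themselves force every geodesic of $(M,g)$ to be closed of a fixed length, which is what the Blaschke/Green endgame needs.
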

\noindent The authors also prove a similar classification theorem of Zoll metrics on $S^3$ using a notion of spherical p-widths (\cite[Thm B]{ambrozio2024rigidity}). 
\subsection{Main Result + Ideas}
In this paper, we compute the full p-width spectrum for $\R \P^2$ with the standard metric, $g_{std}$, coming from quotienting the unit round metric on $S^2$ by the antipodal map.
\begin{theorem} \label{mainTheorem}
For $(\R \P^2, g_{std})$, we have 
\[
\omega_p = 2 \pi \cdot \Big\lfloor \frac{1}{4} \left(1 + \sqrt{1 + 8p} \right) \Big\rfloor \quad \forall p \in \mathbb{N}^+
\]
Moreover, $\omega_p$ is achieved by sweepouts coming from $\Z_2$ invariant polynomials on $S^2$. 
\end{theorem}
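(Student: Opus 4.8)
The plan is to transfer the problem to $S^2$ via the double cover $\pi: S^2 \to \RP^2$ and exploit the known computation of $\omega_p(S^2, g_0)$ from Theorem \ref{SphereWidths}. Here's the structure I'd follow:

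1. **Upper bound via $\Z_2$-invariant polynomials**: On $S^2 \subset \R^3$, the space of polynomials of degree $\leq d$ restricted to $S^2$ gives a family of sweepouts. The key observation is that the antipodal map $x \mapsto -x$ acts on monomials of degree $d$ by $(-1)^d$. So *even*-degree polynomials descend to functions on $\RP^2$. The zero sets of even-degree polynomials, projected to $\RP^2$, give a $p$-sweepout of $\RP^2$ for appropriate $p$. Counting the dimension of the space of $\Z_2$-invariant (= even) polynomials of degree $\leq 2k$ on $S^2$: this has dimension... one needs $\dim = (k+1)(2k+1)$ roughly, and the relevant projectivized family has dimension one less. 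The geodesics on $\RP^2$ are the projections of great circles, all of length $\pi$ (half the length $2\pi$ on $S^2$, since the great circle double-covers its image). So a degree-$2k$ polynomial sweepout should give $\omega_p \leq 2\pi k$ — wait, let me recompute: a great circle on $S^2$ has length $2\pi$, and it's the preimage of a geodesic of length $\pi$ in $\RP^2$ *only when the great circle is invariant under the antipodal map* — which every great circle is. So geodesics in $\RP^2$ have length $\pi$. A polynomial of degree $2k$ restricted to a geodesic has $\leq 4k$ zeros... hmm, but on $\RP^2$ a geodesic is a circle of length $\pi$, and we should think of the count directly on $\RP^2$.

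2. **Dimension count and the formula**: The critical task is matching $p$ to the degree $d$ of the invariant polynomial family. On $\RP^2$, $H^*(\RP^2; \Z_2) = \Z_2[\alpha]/\alpha^3$ with $\alpha \in H^1$, so the $p$-sweepouts detect $\bar\alpha^p$. The space $V_d$ of degree-$\leq d$ even polynomials on $S^2$, projectivized, has some dimension $P(d)$, and $\P(V_d)$ detects $\bar\alpha^{P(d)}$. Solving $p \leq P(d)$ for the minimal $d$ and computing the corresponding geodesic-length bound should produce exactly $2\pi \lfloor \frac14(1 + \sqrt{1+8p})\rfloor$. I'd verify: if the answer is $\omega_p = \pi \cdot (\text{something})$, the quadratic under the square root, $1 + 8p$, strongly suggests that $P(d) \sim \frac12 d^2$, consistent with even polynomials of degree $\leq d$ being "half" of all degree-$\leq d$ polynomials (which have $\P$-dimension $\sim d^2$ on $S^2$ — indeed full-degree gives the $S^2$ count matching $\lfloor\sqrt p\rfloor$). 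Setting $p = \binom{k+1}{2}$-type expression and inverting gives the stated floor.

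3. **Lower bound**: This is where I expect the **main obstacle**. By Theorem \ref{ChoManPWidths}, $\omega_p(\RP^2, g_{std})$ is a positive integer combination of lengths of primitive closed geodesics; since all primitive closed geodesics in $\RP^2$ have length $\pi$, we get $\omega_p \in \pi \Z^+$. So it suffices to show $\omega_p \geq 2\pi k$ for the right $k$, i.e. rule out the sweepout being "too cheap." For this I would either (a) use the Lusternik–Schnirelmann / cohomological lower bound arguments à la Chodosh–Mantoulidis, pulling back sweepouts of $\RP^2$ to $\Z_2$-equivariant sweepouts of $S^2$ and comparing with $\omega_{p'}(S^2) = 2\pi\lfloor\sqrt{p'}\rfloor$ for the appropriate $p'$, or (b) argue directly via the Weyl law $\omega_p \sim c \sqrt p \sqrt{\mathrm{Area}(\RP^2,g_{std})}$ combined with a monotonicity/interpolation argument to pin down the exact values. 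Route (a) seems most promising: an equivariant min-max comparison. The subtlety is that a $p$-sweepout of $\RP^2$ does not lift to a $p$-sweepout of $S^2$ with a simple index shift — one must track how $\bar\alpha_{\RP^2}^p$ relates to $\bar\lambda_{S^2}^q$ under $\pi^*$, and handle the fact that $\pi^*\bar\alpha = 0$ in $H^1(S^2;\Z_2)$, so the lift is genuinely a statement about the *Borel / equivariant* cohomology of the space of cycles, not ordinary cohomology. Carrying out this equivariant comparison carefully, and showing the resulting bound is sharp (matching the polynomial upper bound exactly, with no loss), is the technical heart of the argument.

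4. **Sharpness and conclusion**: Combine the polynomial upper bound from Step 1–2 with the lower bound from Step 3; since both give $2\pi k$ with the same $k = \lfloor\frac14(1+\sqrt{1+8p})\rfloor$, equality holds, and the optimal sweepouts are exhibited explicitly as the $\Z_2$-invariant polynomial families, proving the "moreover" clause.
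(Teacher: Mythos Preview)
Your upper bound sketch is in the right spirit and close to what the paper does (explicit $\Z_2$-invariant polynomial sweepouts, Crofton/B\'ezout mass bound), though the dimension count there is carried out precisely with the space $A_{2d}=\{f(x,y)+zg(x,y): f\text{ even of deg }\le 2d,\ g\text{ odd of deg }\le 2d-1\}$, which has dimension $D(d)=(d+1)(2d+1)$; this is what produces the inverse $f(p)=\lfloor\tfrac14(1+\sqrt{1+8p})\rfloor$.

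The lower bound, however, has a real gap, and your proposed route diverges entirely from the paper's. First, a point you skate past: from Theorem~\ref{ChoManPWidths} you only get $\omega_p\in\pi\Z^+$, yet the target values lie in $2\pi\Z^+$. You never explain how to exclude odd multiples of $\pi$. The paper's key observation here is a \emph{parity constraint}: each primitive geodesic in $(\RP^2,g_{std})$ is homologically nontrivial in $H_1(\RP^2;\Z_2)$, while any element of $\mathcal Z_1(\RP^2;\Z_2)$ (and hence any $\mathbf F$-limit of sweepout images) is homologically trivial; thus in $\omega_p=\sum_i a_i\,\pi$ one must have $\sum_i a_i\equiv 0\pmod 2$. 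Without this, no matching with the upper bound is possible.

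Second, your plan to obtain the sharp lower bound by lifting to $S^2$ and invoking $\omega_{p'}(S^2)=2\pi\lfloor\sqrt{p'}\rfloor$ runs into exactly the obstruction you name: $\pi^*\bar\lambda=0$ in $H^1(S^2;\Z_2)$, so a $p$-sweepout of $\RP^2$ does not lift to a $p$-sweepout of $S^2$, and you would need a genuine equivariant min-max framework you have not set up. The paper avoids this entirely. It perturbs $g_{std}$ to quotient-ellipsoid metrics $g_\mu$ whose only short primitive geodesics have lengths $\pi,\pi+\mu,\pi+2\mu$, proves that the perturbed widths $\omega_1(g_\mu),\dots,\omega_{D(d+1)-1}(g_\mu)$ are \emph{strictly increasing} (via Lusternik--Schnirelmann combined with nondegeneracy, Proposition~\ref{widthInequality}), and then observes that the set of admissible values $\{n_1\pi+n_2(\pi+\mu)+n_3(\pi+2\mu):n_1+n_2+n_3\text{ even}\}\cap(0,2\pi(d+1)+1]$ has exactly $D(d+1)-1$ elements. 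The counting forces equality, and sending $\mu\to 0$ gives the result for $g_{std}$. Your proposal contains none of this machinery, and as written does not establish the lower bound.
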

\noindent The proof is an adapation of the work done by Chodosh--Mantoulidis \cite{chodosh2023p} for the p-widths of $S^2$, with the following modifications:
\begin{enumerate}
    \item Showing that the min-max geodesics must occur with an even total multiplicity, i.e.
    \[
    \omega_p = \sum_{i = 1}^N a_i \cdot \gamma_i \implies \sum_{i = 1}^N a_i \equiv 0 \mod 2
    \]
    (see lemma \ref{lem:perturbedMetrics}, part 2)
    \item Considering sweepouts by level sets of polynomials of the form $f(x,y) + z g(x,y)$, where $f$ is even and $g$ is odd, so that the polynomial is $\Z_2$ invariant and descends to a map on $\R \P^2$ (see lemma \ref{lem:perturbedMetrics}, part 1).
\end{enumerate}
%
%Beyond this there are only minor modifications. \newline
%
In \S \ref{mmBackground}, we define the basic notions of Almgren--Pitts min-max theory as relevant to this paper. In \S \ref{lemmasSection}, we recall some essential propositions and lemmas from \cite{chodosh2023p}. We also construct the modified sweepouts to attain an upper bound for $\omega_p(\R \P^2)$. In \S \ref{ProofMain}, we prove the main theorem using the lemmas and a modified counting argument.

\subsection{Acknowledgements}
The author would like to thank Fernando Marques for suggesting the problem and Christos Mantoulidis and Otis Chodosh for many helpful discussions.
\section{Min--Max Background and Notation} \label{mmBackground}
We recall the essential notation from the min-max theory of Almgren--Pitts \cite{almgren1962homotopy} and Marques--Neves \cite{MN2020CETSurvey}. Let $M$ be an $(n+1)$-dimensional manifold, let $X \subseteq [0,1]^k$ denote a cubical subcomplex of the $k$-dimensional unit cube (cf. \cite[\S 2.2]{MN2020CETSurvey}), and let $\mathcal{Z}_n(M; \Z_2)$ denote the set of $n$-cycles mod $\Z_2$ - informally, a cycle is the boundary of a nice (Caccioppoli) open set. In \cite{almgren1962homotopy}, Almgren showed that $\mathcal{Z}_n(M; \Z_2)$ is weakly homotopic to $\mathbb{R} \mathbb{P}^{\infty}$. We let $\overline{\lambda} \in H^1(\mathcal{Z}_n(M; \Z_2), \Z_2) \cong \Z_2$ denote the generator of this group, so that $\overline{\lambda}^p$ generates $H^p(\mathcal{Z}_n(M; \Z_2), \Z_2) \cong \Z_2$. For $T \in \mathcal{Z}_n(M; \Z_2)$, there is a natural associated varifold to $T$ and we let $||T||$ denote the corresponding Radon measure, with $||T||(U)$ denoting the integral of the measure over $U$. We further let $\mathbf{M}(T) = ||T||(M)$. 
\begin{definition}[\cite{marques2017existence}, Defn 4.1] \label{defi:sweepout}
	A map $\Phi : X\to \mathcal{Z}_{n}(M;\Z_{2})$ is a \textbf{$p$-sweepout} if it is continuous (with the standard flat norm topology on $\mathcal{Z}_n(M; \Z_2)$) and $\Phi^{*}(\overline\lambda^{p}) \neq 0 \in H^p(X)$. 
\end{definition}

\begin{definition}[\cite{marques2017existence} \S 3.3] \label{defi:no.concentration.of.mass}
	A map $\Phi : X \to \mathcal{Z}_n(M; \Z_2)$ is said to have \textbf{no concentration of mass} if
	\[ \lim_{r \to 0} \sup \{ \Vert \Phi(x) \Vert(B_r(p)) : x \in X, p \in M \} = 0. \]
\end{definition}

\begin{definition}[\cite{gromov2002isoperimetry}, \cite{guth2009minimax}] \label{defi:p-width}
	We define $\mathcal{P}_{p} = \mathcal{P}_{p}(M)$ to be the set of all $p$-sweepouts, out of any cubical subcomplex $X$, with no concentration of mass. The \textit{$p$-width} of $(M,g)$ is
	\[
	\omega_{p}(M,g) := \inf_{\Phi \in \mathcal{P}_{p}} \sup\{\mathbf{M}(\Phi(x)) : x \in \mathrm{domain}(\Phi)\}.
	\]
\end{definition}
\noindent We note that one can work with refined sweepouts to compute $\omega_p$, i.e. let 
\[ 
\mathcal{P}^{\mathbf{F}}_{p,m} : = \{\Phi \in \mathcal{P}_{p} : \dmn(\Phi) \subset I^m \text{ and } \Phi \text{ is } \mathbf{F}\text{-continuous} \} 
\]
then:
\begin{lemm}[Lemma 2.6, \cite{chodosh2023p}] \label{lemm:P.p.m}
If $m = 2p+1$, then
\[
\omega_{p}(M,g) = \inf_{\Phi \in \mathcal{P}^{\mathbf{F}}_{p,m}} \sup\{\mathbf{M}(\Phi(x)) : x \in \dmn(\Phi)\}.
\]
\end{lemm}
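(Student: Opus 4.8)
The plan is to establish the two inequalities separately; the inequality $\inf_{\Phi \in \mathcal{P}^{\mathbf{F}}_{p,m}} \sup\{\mathbf{M}(\Phi(x))\} \ge \omega_p$ is essentially formal, while the reverse inequality carries all of the content. For the formal direction I would verify $\mathcal{P}^{\mathbf{F}}_{p,m} \subseteq \mathcal{P}_p$: the $\mathbf{F}$-metric dominates the flat metric, so $\mathbf{F}$-continuity implies flat continuity, and an $\mathbf{F}$-continuous map out of a compact cubical complex automatically has no concentration of mass, since its image is $\mathbf{F}$-compact in the space of cycles and the rectifiability of $n$-cycles then forces $\sup_{x,q}\|\Phi(x)\|(B_r(q)) \to 0$ as $r \to 0$ (this is standard; see Liokumovich--Marques--Neves). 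Hence the infimum over the smaller family $\mathcal{P}^{\mathbf{F}}_{p,m}$ is at least $\omega_p$.

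For the reverse inequality, fix $\eps > 0$ and choose $\Phi_0 \in \mathcal{P}_p$, out of a cubical subcomplex $X \subseteq I^k$, with $\sup\{\mathbf{M}(\Phi_0(x))\} < \omega_p + \eps$. There are two defects to repair: $\Phi_0$ is only flat-continuous, and the ambient cube $I^k$ may have dimension far larger than $2p+1$. To repair continuity I would invoke the Almgren--Pitts discretization together with the Marques--Neves interpolation theorem: from a flat-continuous map with no concentration of mass one obtains an $\mathbf{F}$-continuous map $\Phi_1$, flat-homotopic to $\Phi_0$, with $\sup\{\mathbf{M}(\Phi_1(x))\} \le \sup\{\mathbf{M}(\Phi_0(x))\} + \eps$; since $\Phi_1 \simeq \Phi_0$ in the flat topology we still have $\Phi_1^*(\overline{\lambda}^p) \ne 0$, so $\Phi_1$ is a $p$-sweepout. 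This interpolation step is the technical heart, and I would cite it rather than reprove it.

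To repair the dimension I would pass to the $p$-skeleton $X^{(p)}$. Naturality of cup products gives $(\Phi_1|_{X^{(p)}})^*(\overline{\lambda}^p) = (\Phi_1^*(\overline{\lambda}^p))|_{X^{(p)}}$, and the long exact sequence of the pair $(X, X^{(p)})$ shows $H^p(X;\Z_2) \to H^p(X^{(p)};\Z_2)$ is injective because $X/X^{(p)}$ has cells only in dimensions $> p$; hence $\Phi_1|_{X^{(p)}}$ is again a $p$-sweepout, and restriction manifestly preserves $\mathbf{F}$-continuity and no concentration of mass while not increasing $\sup\{\mathbf{M}(\cdot)\}$. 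Now $X^{(p)}$ is a finite $p$-dimensional cubical complex, so by the classical general-position embedding it PL-embeds into $\mathbb{R}^{2p+1}$, and after rescaling and passing to a sufficiently fine cubical subdivision we may realize it as a cubical subcomplex of $I^{2p+1}$. Transporting $\Phi_1|_{X^{(p)}}$ through this embedding yields a map $\Phi \in \mathcal{P}^{\mathbf{F}}_{p, 2p+1}$ with $\sup\{\mathbf{M}(\Phi(x))\} \le \omega_p + 2\eps$, and sending $\eps \to 0$ completes the argument.

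I expect the main obstacle to be the interpolation/discretization input, i.e.\ converting the abstract flat-continuous sweepout into an $\mathbf{F}$-continuous one without losing mass control; this is precisely where the Almgren--Pitts and Marques--Neves machinery is essential. By contrast, the reduction to dimension $2p+1$ through the $p$-skeleton and a general-position embedding is comparatively soft, the only subtlety being the routine point that cubical subdivisions permit the embedded $p$-skeleton to be viewed genuinely as a subcomplex of $I^{2p+1}$, which is compatible with the cubical framework in use.
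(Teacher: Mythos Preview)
The paper does not contain a proof of this lemma at all; it merely quotes it as \cite[Lemma 2.6]{chodosh2023p} and uses it as a black box. Your proposal is a faithful outline of the argument given in that reference: the easy inclusion $\mathcal{P}^{\mathbf{F}}_{p,m}\subseteq\mathcal{P}_p$ via $\mathbf{F}$-continuity implying flat continuity and no concentration of mass, the Marques--Neves discretization/interpolation step to upgrade a near-optimal flat-continuous sweepout to an $\mathbf{F}$-continuous one in the same flat homotopy class with controlled mass increase, restriction to the $p$-skeleton using injectivity of $H^p(X;\Z_2)\to H^p(X^{(p)};\Z_2)$, and finally a general-position PL embedding of the $p$-skeleton into $I^{2p+1}$ realized cubically. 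This is exactly the route taken in \cite{chodosh2023p}, so there is nothing to compare against in the present paper and your sketch is correct as stated.
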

\noindent To see the analogy between $\omega_p$ and $\lambda_p$, consider the following characterization of $\lambda_p(M)$ as
\begin{align*}
	\lambda_p(M) &:= \inf_{\substack{V \subseteq C^{\infty}(M) \\ \dim V = p+1}} \sup_{f \in V \backslash \{0\}} \frac{\int_M |\nabla f|^2}{\int_M f^2}
\end{align*}
Note the parallels between the min-max definitions of $\omega_p, \lambda_p$, with $\Phi^{*}(\overline\lambda^{p}) \neq 0$ being analogous to $\dim(V) = p+1$.

\section{Fundamental Lemmas} \label{lemmasSection}

\subsection{A good metric on $\R \P^2$}
Much of this section is analogous to \cite[\S 6]{chodosh2023p}: Consider the ellipsoids
\begin{equation} \label{ellipsoidEq}
E(a_1, a_2, a_3) := \{ (x_1, x_2, x_3) \in \RR^3 : a_1 x_1^2 + a_2 x_2^2 + a_3 x_3^2 = 1 \} \subset \RR^3 
\end{equation}
and the three geodesics
\[ \gamma_i(a_1, a_2, a_3) := E(a_1, a_2, a_3) \cap \{ x_i = 0 \}, \; i = 1, 2, 3 \]
on them. It is shown in \cite[Theorems IX 3.3, 4.1]{Morse:calculus.variations} that for every $\Lambda > 2\pi$, if $a_1 < a_2 < a_3$ are sufficiently close to $1$ (depending on $\Lambda$), then every closed connected immersed geodesic $\gamma \subset E(a_1, a_2, a_3)$ (coverings allowed) satisfies:
\begin{equation} \label{eq:geodesic.network.ellipsoid.geodesic.genericity}
	\gamma \text{ has no nontrivial normal Jacobi fields if } \length(\gamma) < 2\Lambda,
\end{equation} 
and
\begin{equation} \label{eq:geodesic.network.ellipsoid.geodesic.uniqueness}
	\gamma \text{ is an iterate of one of } \gamma_i(a_1, a_2, a_3), \; i = 1, 2, 3, \text{ if } \length(\gamma) < 2\Lambda.
\end{equation}
By quotienting by $(x_1, x_2, x_3) \sim (-x_1, -x_2, -x_3)$ the above ellipsoids descend to metrics on $\R \P^2$ such that equations \ref{eq:geodesic.network.ellipsoid.geodesic.genericity} \ref{eq:geodesic.network.ellipsoid.geodesic.uniqueness} hold by simply lifting the jacobi fields and geodesics to $S^2$. \newline 
\indent For $(a_1, a_2, a_3) \in \RR^3$, consider the vector $\vec{\ell}(a_1, a_2, a_3) \in \RR^3$ whose $i$-th component, $i=1, 2, 3$, is
\[ \big[ \vec{\ell}(a_1, a_2, a_3) \big]_i := \length(\gamma_i(a_1, a_2, a_3)). \]
It is easy to see that $(a_1, a_2, a_3) \mapsto \vec{\ell}(a_1, a_2, a_3)$ is smooth near $(1, 1, 1)$, and
\[ D\vec{\ell}(1, 1, 1) = \begin{pmatrix} 0 & \pi & \pi \\ \pi & 0 & \pi \\ \pi & \pi & 0 \end{pmatrix}. \]
Then, by the inverse function theorem there are smooth functions $\mu \mapsto a_i(\mu)$, $a_i(0) = 1$, $i = 1, 2, 3$, so that, for $\mu$ near $0$,
\[ \vec{\ell}(a_1(\mu), a_2(\mu), a_3(\mu)) = (2\pi, 2\pi + 2\mu, 2\pi+4\mu). \]
Thus, for sufficiently small $\mu$, 
\begin{equation} \label{eq:geodesic.network.ellipsoid.geodesic.lengths}
	\length(\gamma_i(a_1, a_2, a_3)) = 2\pi + (i-1) 2\mu.
\end{equation}
Moreover, these metrics again descend to $\R \P^2$, producing geodesics of length $\pi, \pi + \mu, \pi + 2\mu$.
%\red{I think this all works for $\R \P^2$ in the same way that it works for $S^2$}. \newline \newline
%
\indent We now recreate the following theorem, which is a direct analogue of \cite[Thm 6.1]{chodosh2023p}
\begin{theorem}\label{theo:good.metric.final}
	Let $\Lambda > 0$ and $U$ be any neighborhood, in the $C^\infty$ topology, of the standard metric $g_{std} \in \met(\R\P^2)$. There is a $\mu_0 = \mu_0(\Lambda, U) > 0$, so that for all $\mu \in (0, \mu_0)$, there exists $g_\mu \in U$ with all these properties:
	\begin{enumerate}
		\item There are simple closed geodesics $\gamma_1$, $\gamma_2$, $\gamma_3 \subset (\R \P^2, g_\mu)$ so that $\length_{g_\mu}(\gamma_i) = \pi + (i-1) \mu$.
		\item If a closed connected geodesic in $(\R\P^2, g_\mu)$ has $\length_{g_\mu} < \Lambda$, then it is an iterate of $\gamma_i$, $i = 1$, $2$, $3$.
		\item There are no not-everywhere-tangential stationary varifold Jacobi fields along any $S \in \cS^{\Lambda}(g_\mu)$.
	\end{enumerate}
	Moreover, $g_\mu \to g_{std}$ as $\mu \to 0$ in the $C^\infty$ topology.
\end{theorem}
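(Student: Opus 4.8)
The plan is to mirror the structure of \cite[Thm 6.1]{chodosh2023p}, reducing the statement on $\R\P^2$ to the corresponding statement on $S^2$ via the antipodal quotient. First I would fix $\Lambda > 0$ and the neighborhood $U$ of $g_{std}$. Pulling $U$ back along the double cover $\pi : S^2 \to \R\P^2$ and intersecting with the $\Z_2$-invariant metrics gives a neighborhood $\widetilde U$ of $g_0$ (the round metric) in $\met(S^2)^{\Z_2}$; all of the ellipsoid metrics constructed in equations \eqref{ellipsoidEq}--\eqref{eq:geodesic.network.ellipsoid.geodesic.lengths} are $\Z_2$-invariant, hence descend to $\R\P^2$, so it suffices to choose $\mu_0$ so that the lifted metric $\widetilde g_\mu := E(a_1(\mu), a_2(\mu), a_3(\mu))$ lies in $\widetilde U$ for $\mu \in (0,\mu_0)$; this is possible because $\mu \mapsto \widetilde g_\mu$ is smooth with $\widetilde g_0 = g_0$. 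Set $g_\mu := $ the descended metric, which then lies in $U$ and converges to $g_{std}$ in $C^\infty$. Item (1) is then immediate from \eqref{eq:geodesic.network.ellipsoid.geodesic.lengths}: the equatorial geodesics $\gamma_i(a_1,a_2,a_3) \subset S^2$ have length $2\pi + (i-1)2\mu$, are simple and $\Z_2$-invariant, and descend to simple closed geodesics on $(\R\P^2, g_\mu)$ of half the length, $\pi + (i-1)\mu$.

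For item (2), I would lift: given a closed connected geodesic $\sigma \subset (\R\P^2, g_\mu)$ with $\length_{g_\mu}(\sigma) < \Lambda$, its preimage $\pi^{-1}(\sigma)$ is either a single closed geodesic of length $2\length_{g_\mu}(\sigma) < 2\Lambda$ (if $\sigma$ does not lift to a loop) or two disjoint copies each of length $\length_{g_\mu}(\sigma) < 2\Lambda$; in either case we produce a closed connected immersed geodesic in $(S^2, \widetilde g_\mu)$ of length $< 2\Lambda$. Choosing $a_1 < a_2 < a_3$ close enough to $1$ (depending on $\Lambda$) so that \eqref{eq:geodesic.network.ellipsoid.geodesic.uniqueness} holds with this $\Lambda$, that lifted geodesic is an iterate of some $\gamma_i(a_1,a_2,a_3)$, and pushing back down shows $\sigma$ is an iterate of the descended $\gamma_i$. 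The same lifting argument handles the Jacobi field statement \eqref{eq:geodesic.network.ellipsoid.geodesic.genericity}.

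Item (3) is the part requiring the most care, exactly as in \cite{chodosh2023p}: one must rule out stationary integral varifold Jacobi fields (in the sense of Marques--Neves) along any $S$ in the relevant class $\cS^\Lambda(g_\mu)$ of stationary varifolds of mass $\le \Lambda$ that are not everywhere tangential. The strategy is the one from \cite[\S 6]{chodosh2023p}: by items (1)--(2), any such $S$ is a finite union $\sum m_i \gamma_i$ of iterates of the three geodesics, so a varifold Jacobi field decomposes into classical normal Jacobi fields along each $\gamma_i$ together with the combinatorial data of how mass can be exchanged; the no-nontrivial-Jacobi-field condition \eqref{eq:geodesic.network.ellipsoid.geodesic.genericity} kills the classical part on each component (since each component has length $< 2\Lambda$), and one then argues that the geodesics $\gamma_i$ being pairwise distinct and the length vector $(\pi, \pi+\mu, \pi+2\mu)$ being rationally independent-in-the-relevant-range forces the combinatorial part to vanish as well. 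I expect this verification — carefully matching the definition of ``stationary varifold Jacobi field'' on $\R\P^2$ with its lift to $S^2$ and invoking the genericity of $g_\mu$ — to be the main obstacle, though it is essentially a transcription of the $S^2$ argument once the lifting correspondence in items (1)--(2) is set up. Finally, the $C^\infty$ convergence $g_\mu \to g_{std}$ follows from $\widetilde g_\mu \to g_0$ and the fact that the quotient map is a local isometry, completing the proof. $\qed$
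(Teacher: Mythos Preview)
Your treatment of items (1) and (2) via lifting to $S^2$ is fine and matches the paper's setup. The genuine gap is in item (3), and it stems from a structural difference: you take $g_\mu$ to be the quotient ellipsoid metric \emph{itself}, whereas the paper takes $g_\mu$ to be a further \emph{generic perturbation} of the quotient ellipsoid metric $g^E_\mu$. The paper does not attempt a direct verification of (3) on the ellipsoid; instead it invokes \cite[Corollary 5.35, Remark 5.34]{chodosh2023p} to produce, in any neighborhood $V$ of $g^E_\mu$, a dense set $D$ of metrics satisfying (1) and (3), and then runs a compactness/contradiction argument to show that some metric in $D$ close to $g^E_\mu$ also retains (2).

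Your proposed direct verification of (3) does not go through as written. The claim that a stationary varifold Jacobi field along $S=\sum m_i\gamma_i$ ``decomposes into classical normal Jacobi fields along each $\gamma_i$ together with combinatorial data'' is exactly the delicate point; the three planar geodesics intersect one another (pairwise, at the coordinate axes), and the varifold Jacobi condition at those junctions is not controlled by the smooth nondegeneracy \eqref{eq:geodesic.network.ellipsoid.geodesic.genericity} of each $\gamma_i$ individually. Your appeal to ``rational independence'' of $(\pi,\pi+\mu,\pi+2\mu)$ is also not available: these lengths satisfy the relation $\ell_1+\ell_3=2\ell_2$, so they are rationally dependent for every $\mu$, and in any case length relations are not what governs the existence of varifold Jacobi fields at crossings. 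Finally, you close by ``invoking the genericity of $g_\mu$,'' but in your construction $g_\mu$ is the specific ellipsoid quotient, not a generic metric, so this appeal is inconsistent with your own setup. To repair the argument you should follow the paper: keep the ellipsoid quotient as a base point, apply the moduli/genericity machinery of \cite[\S 5]{chodosh2023p} to obtain (3) on a dense set nearby, and then show (2) survives the perturbation by a limiting argument using the isolation of short geodesics in $(\R\P^2,g^E_\mu)$ coming from \eqref{eq:geodesic.network.ellipsoid.geodesic.uniqueness}.
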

\begin{proof}
	We choose $\mu_0$ small enough that $g^E_\mu \in U$ for all $\mu \in (0, \mu_0)$, where $g^E_\mu$ is the  induced metric of $E(a_1(\mu), a_2(\mu), a_3(\mu))/\{x \sim -x\}$.
	
	Fix any such $\mu$. By equation \eqref{eq:geodesic.network.ellipsoid.geodesic.genericity}, \cite[Corollary 5.35]{chodosh2023p} applies at $g^E_\mu$ with $2\Lambda$ in place of $\Lambda$ and with $S_i^0$, the multiplicity-one varifolds on $\gamma_i(a_1(\mu), a_2(\mu), a_3(\mu)) / \{x \sim - x \}$ for $i = 1$, $2$, $3$. As a consequence of the moduli space and generic regularity theory from Corollary 5.35 and Remark 5.34 of \cite{chodosh2023p} (both of which apply to any closed surface), there is a neighborhood $V \subset U$ of $g^E_\mu$ inside of which there is a dense set $D \subset V$ of metrics satisfying conclusions (1) and (3) of our theorem; denote the distinguished geodesics by $\gamma_j(g) \subseteq \R \P^2$, $j = 1$, $2$, $3$. 
	
	It remains to show that at least one of the metrics from $D$ satisfies conclusion (2) too and can be taken arbitrarily close to $g_\mu^E$. Suppose that were not the case. Take any sequence $\{ g_\mu^i \}_{i=1}^{\infty} \subset D$ with $g_\mu^i \to g^E_\mu$ as $i \to \infty$. Let $\gamma_\mu^i$ be a closed connected geodesic in $(\R \P^2, g_\mu^i)$ with $\length_{g_\mu^i}(\gamma_\mu^i) < \Lambda$ that is not an iterate of any of $\gamma_j(g_\mu^i)$, $j = 1, 2, 3$. Pass to $i \to \infty$ along a subsequence (not relabeled) so that $\gamma^i_\mu \to \gamma^E_\mu$, a geodesic in $(\R \P^2, g_\mu^E)$ with $\length_{g^E_\mu}(\gamma^E_\mu) \leq \Lambda < 2 \Lambda$. By \eqref{eq:geodesic.network.ellipsoid.geodesic.uniqueness}, $\gamma^E_\mu$ is an iterate of a $\gamma_j(g^E_\mu)$, $j = 1, 2, 3$. So, $\gamma^i_\mu$ is $o(1)$-close (as $i \to \infty$) to being an iterate of $\gamma_j(g^i_\mu)$. However, recall that the three $\gamma_j(g_\mu)$ and their iterates with length $< 2 \Lambda$ are isolated in $(\R \P^2, g_\mu)$ by equation \eqref{eq:geodesic.network.ellipsoid.geodesic.uniqueness}. Therefore, the geodesics $\gamma_j(g^i_\mu)$ and their iterates with length $\leq \Lambda$ are isolated in $(\R \P^2, g^i_\mu)$, contradicting the existence of $\gamma^i_\mu$ when $i$ is large.
    %(\red{This follows by constructing a Jacobi field right? If $\gamma_{\mu}^i$ was close to $\gamma_j(g_{\mu}^i)$})
    This completes the proof.
\end{proof}
\subsection{Upper Bound}
We recreate the analogous upper bound on the p-widths, by using a restricted class of sweepouts on $\R \P^2$. First, let $D(d) = (d+1)(2d+1)$, and let $f: \Z^+ \to \Z^+$ be such that 
\[
f(p) = d + 1 \iff p \in \{D(d), \dots, D(d+1)-1\}
\]
\begin{lemma} \label{lem:perturbedMetrics}
For $p \in \N^*$, there eixsts $\mu_1 > 0$ and an open neighborhood $U$ of the unit round metric on $\R \P^2$ depending on $p$ such that for all $\mu \in (0, \mu_1)$, $g_{\mu} \in U$ and 
\begin{enumerate}
    \item \label{enum:upperBound} $\omega_p(\R\P^2, g_{\mu}) \leq 2 \pi f(p) + 1$
    \item \label{enum:containment} For any $\mathcal{F}$-homotopy class $\Pi \subseteq \mathcal{P}^{\mathbf{F}}_{p,m}$ (i.e. $p$-sweepouts coming from domains of dimension at most $m$)
    \begin{align*}
    \mathbf{L}_{AP}(\Pi, g_{\mu}) &\in  \Big( \{ n_1 \cdot \pi + n_2 \cdot (\pi + \mu) + n_3 \cdot (\pi + 2 \mu) \; \\
    & \qquad : \; (n_1,n_2,n_3) \in \N^3 \} \backslash \{0\}, \; n_1 + n_2 + n_3 \equiv 0 \mod 2 \Big) \\
    & \qquad \cup [2\pi f(p) + 2, \infty)
    \end{align*}
    %
    %\red{Probably want to refine this argument so that the coefficients are $2\pi$ somehow...}
\end{enumerate}
\end{lemma}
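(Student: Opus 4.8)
The plan is to treat the two parts somewhat separately, since part (1) is a direct construction while part (2) is an Almgren–Pitts structure statement.

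For part (1), the upper bound, I would build explicit $p$-sweepouts of $(\R\P^2, g_\mu)$ out of the $\Z_2$-invariant polynomials described in the introduction, namely zero sets of $f(x,y) + z\,g(x,y)$ with $f$ even and $g$ odd of appropriate degree. The key is a dimension count: the space of such polynomials of degree $\le d$, modulo scaling, has projective dimension $D(d) - 1 = (d+1)(2d+1) - 1$, so it gives a map $\R\P^{D(d)-1} \to \mathcal{Z}_1(\R\P^2;\Z_2)$ whose pullback of $\bar\lambda$ is the generator (one checks this by restricting to a projective line and using that a generic pencil of such curves sweeps out $\R\P^2$ exactly once mod 2 — this is the $\Z_2$-equivariant analogue of the Chodosh–Mantoulidis computation for $S^2$, and should follow from their machinery applied on $S^2$ with the antipodal action). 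Hence for $p \le D(d)-1$, i.e. $f(p) = d$ in the indexing... wait, I need to be careful with the indexing: $f(p) = d+1 \iff p \in \{D(d),\dots,D(d+1)-1\}$, so $f(p) = d$ corresponds to $p \le D(d)-1$, and the sweepout above is a $p$-sweepout for all such $p$. Then I bound $\sup_x \mathbf{M}(\Phi(x))$: on $(S^2, g_0)$ a real algebraic curve of degree $d$ has length at most (essentially) $2\pi d$ by an integral-geometry / Crofton argument (count intersections with great circles, which is $\le 2d$ by Bézout), and descending to $\R\P^2$ halves this to $\pi d$; after perturbing the metric from $g_0$ to $g_\mu$ and adding a safety margin we get $\le 2\pi f(p) + 1$. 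Actually the factor needs rechecking: the claimed bound is $2\pi f(p)+1$, and $f(p) = d+1$ when $D(d) \le p$, so I want length $\lesssim 2\pi(d+1)$ for the relevant curves — I'd arrange the degree bookkeeping so that the sweepout used for a given $p$ has degree $d$ with $f(p) = d+1$, giving genuine room. The main subtlety here is the Crofton/Bézout length estimate on $\R\P^2$ with the (perturbed) round metric and making the constant land below $2\pi f(p) + 1$; this is the step I'd expect to fuss over most within part (1).

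For part (2), I would invoke Theorem 3.1 (the good metric $g_\mu$): by conclusions (1)–(3) there, every closed connected geodesic of length $< \Lambda$ in $(\R\P^2, g_\mu)$ is an iterate of one of $\gamma_1, \gamma_2, \gamma_3$ of lengths $\pi, \pi+\mu, \pi+2\mu$, and there are no nontrivial varifold Jacobi fields along stationary integral varifolds supported on these. Then I would apply the Almgren–Pitts min-max theorem together with the curvature-free structure results of Chodosh–Mantoulidis (their §5, which applies to any closed surface): $\mathbf{L}_{AP}(\Pi, g_\mu)$ is realized by a stationary integral varifold which is a union of closed geodesics with positive integer multiplicities, hence lies in the set $\{n_1\pi + n_2(\pi+\mu) + n_3(\pi+2\mu) : (n_i) \in \N^3\} \setminus \{0\}$ — provided $\mathbf{L}_{AP} < \Lambda$; if instead $\mathbf{L}_{AP} \ge \Lambda$ we choose $\Lambda$ at the outset larger than $2\pi f(p)+2$ so that we land in the interval $[2\pi f(p)+2, \infty)$. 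The genuinely new ingredient is the parity constraint $n_1 + n_2 + n_3 \equiv 0 \bmod 2$. I'd prove this by a mod-2 linking/degree argument: a one-cycle in $\mathcal{Z}_1(\R\P^2;\Z_2)$ that is the min-max limit of a $p$-sweepout must be $\Z_2$-homologically trivial (it bounds, being a limit of boundaries of Caccioppoli sets), but each $\gamma_i$, lifted from $S^2$, represents the nontrivial class in $H_1(\R\P^2;\Z_2) \cong \Z_2$ (a projective line), so the total multiplicity must be even for the sum to vanish in $H_1(\R\P^2;\Z_2)$. Making this rigorous at the varifold level — matching the varifold min-max limit with its cycle and reading off the $\Z_2$-homology class — is the main obstacle, and is where I'd need to be careful that the Allen–Cahn / Almgren–Pitts limit retains the flat chain mod 2 that is nullhomologous; I would lean on the fact that $\Phi: X \to \mathcal{Z}_1(\R\P^2;\Z_2)$ takes values in cycles (each homologous to $0$) and that $\mathbf{F}$-convergence along the min-max sequence preserves the mod-2 homology class of the underlying flat chains, so the limit cycle is nullhomologous and its support, being a union of the $\gamma_i$ with multiplicities $n_i \bmod 2$, forces $\sum n_i \equiv 0$.

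I expect the hardest part to be rigorously extracting the parity condition in part (2): everything else is a careful transcription of Chodosh–Mantoulidis, but the even-multiplicity phenomenon is genuinely particular to $\R\P^2$ and hinges on correctly identifying the $\Z_2$-homology class of the min-max varifold, which requires being precise about passing between the varifold limit, its associated mod-2 flat cycle, and the homology it represents.
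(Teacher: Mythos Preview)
Your plan matches the paper's proof essentially step for step: the upper bound via $\Z_2$-invariant polynomial sweepouts with a Crofton--B\'ezout mass estimate followed by continuity of the widths, and the containment via the good-metric theorem together with exactly the $H_1(\R\P^2;\Z_2)$ parity argument you outline (the min-max cycle is an $\mathbf{F}$-limit of nullhomologous cycles, hence nullhomologous, while each $\gamma_i$ is the nontrivial class). The one place your bookkeeping drifts is the degree: the paper takes polynomials of degree $\le 2d$ (so that $\dim A_{2d} = (d+1)(2d+1) = D(d)$ and the B\'ezout intersection count with a great circle is $2d\cdot 2\cdot 1 = 4d$, giving mass $\le 2\pi d$ on $\R\P^2$ after halving), which makes the bound $\omega_p(\R\P^2, g_{std}) \le 2\pi f(p)$ land exactly, with the ``$+1$'' arising only from continuity under the perturbation to $g_\mu$.
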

\begin{proof}
To verify \ref{enum:upperBound}, let 
\[
A_{2d} = \{f(x,y) + z g(x,y) \; | \; f \in \R_{e,2d}[x,y], \quad g \in \R_{o, 2d-1}[x,y]\}
\]
Here $\R_{e,2d}[x,y]$ denotes the set of all even polynomials in $x$ and $y$ of degree $\leq 2d$ and $\R_{o, 2d-1}$ the analogous space of odd polynomials of degree $\leq 2d-1$. Note that 
\begin{align*}
\dim(A_{2d}) &= \left( \sum_{i = 0}^{d} (2i+1)\right)  + \left(\sum_{i=1}^d 2i\right) \\
&= \left[ 2\frac{d(d+1)}{2} + (d + 1) \right]  + \left[ 2 \frac{d(d+1)}{2} \right] \\
&= (d+1)^2 + d(d+1) \\
&= (d+1)(2d + 1) = D(d)
\end{align*}
Let $\{p_i^{e,2d}\}$ be an enumeration of even polynomials of order $\leq 2d$ and $\{p_j^{o,2d-1}\}$ the analgoous for odd polynomials of order $\leq 2d-1$. For $d \geq 1$, we define a $D(d)-1$-sweepout of $(\R \P^2, g_{std})$ as follows
\begin{align*}
F_d&: \R \P^{D(d) - 1} \to \mathcal{Z}_1( \R \P^2, \Z_2) \\
F_d([a_1 : \cdots : a_{D(d)}]) &= \Big\{ \sum_{i = 1}^{(d+1)^2} a_{i} p^{e,2d}_{i}(x,y) + z\sum_{j = 1}^{d(d+1)} a_{(d+1)^2 + j} p^{o,2d-1}_{j}(x,y) = 0\Big\}
\end{align*}
Note the $(x,y,z) \sim (-x,-y,-z)$ invariance of $F_d([a_1:\cdots:a_{D(d)}])$ so that it produces a well defined element of $\mathcal{Z}_1(\R \P^2, \Z_2)$. We see that $F_d$ is $D(d)-1$-sweepout by checking that it is a $1$-sweepout when restricting to the copy of $\R \P^1$ realized by $0 = a_3 = \dots a_{D(d)}$ as then
\[
F_d([a_1: a_2: 0 : \cdots : 0]) = \{a_1 + a_2 x^2 = 0\}
\]
where by convention, we assume that $p_1^{e,2d} = 1$ and $p_2^{e,2d} = x^2$. \newline 
\indent We now compute an upper bound on $\mathbf{M}(F)$ using the crofton formula, i.e.
\begin{align*}
\mathbf{M}(F_d([\vec{a}])) &= \frac{1}{2} \mathbf{M}(\tilde{F}_d([\vec{a}])) = \frac{1}{2} \frac{1}{4} \int_{\xi \in S^2} \sharp (\tilde{F}([\vec{a}]) \cap \xi^{\perp})
\end{align*}
where $\tilde{F}_d([\vec{a}]) \in \mathcal{Z}_1( S^2, \Z_2)$ is the preimage of $F_d([\vec{a}])$ under the antipodal map (recall we are working with the unit round metric still), and $\xi^{\perp}$ denotes the great circle which lies in the plane in $\R^3$ normal to $\xi$. Note that every such plane is given by $\{\alpha x + \beta y + \gamma z = 0\}$ for some $(\alpha, \beta, \gamma) \in \mathbb{R}^3$. Then 
\begin{align*}
\tilde{F}_d([\vec{a}]) \cap \xi^{\perp} &= \{ \sum_{i = 1}^{(d+1)^2} a_{i} p^{e,2d}_{i}(x,y) + z\sum_{j = 1}^{d(d+1)} a_{(d+1)^2 + j} p^{o,2d-1}_{j}(x,y) = 0\} \cap \{x^2 + y^2 = 1\} \\
& \qquad \quad \cap \{\alpha x + \beta y + \gamma z = 0\}
\end{align*}
By Bezout's inequality, we have that 
\[
\sharp \left(\tilde{F}_d([\vec{a}]) \cap \xi^{\perp}\right) \leq 2d \cdot 2 \cdot 1 = 4d
\]
%
\begin{comment}
\red{I think Aiex claims $4d$, for at least the case of $d = 1$. Some argument is wrong in my counting the number of intersection points} (\purple{Update: the degree argument is wrong, see Bezout's inequality in algebraic geometry which says that the number of solutions is at most the product of the degrees of the polynomials in the system of algebraic equations}
having substituted $y^2 = 1 - x^2$ and $z = 0$. And hence, there are at most $2d$ points of intersection, i.e. 
\[
\sharp (\tilde{F}([\vec{a}]) \cap \xi^{\perp}) \leq 2d
\]
By symmetry, this must be true for all $\xi \in S^2$, and so we conclude
%
\begin{align*}
\mathbf{M}(F_d([\vec{a}])) &\leq \frac{1}{2} \frac{1}{4} \int_{\xi \in S^2} 2d \\
& \leq \frac{1}{8} \cdot 4 \pi \cdot 2d \\
&= \pi \cdot d
\end{align*}
%
%\red{Off by a factor of $2$ somehow. This says that $\omega_1, \omega_2, \dots, \omega_5 \leq \pi$, which is too small of a bound. Need it to be $2\pi$} \newline 
%Note that 
%\[
%dd
%\]
%See here: https://math.stackexchange.com/questions/1529268/simple-proof-of-the-cauchy-crofton-formula-on-the-sphere
%
\end{comment}
%
Thus, we conclude 
\begin{align*}
\mathbf{M}(F_d([\vec{a}])) &\leq \frac{1}{2} \frac{1}{4} \int_{\xi \in S^2} 4d \\
& \leq \frac{1}{8} \cdot 4 \pi \cdot 4d \\
&= 2 \pi \cdot d
\end{align*}
%
%\red{Should be $2 \pi d$} \nl \nl 
%
Now notice that if $f(p) = d$ then $F_{d}$ is a p-sweepout, and so we conclude 
\[
\omega_p(\R \P^2, g_{std}) \leq 2 \pi f(p)
\]
and by continuity of the p-widths (see \cite[Lemma 2.1]{irie2018density}), we have that if $U$ is a sufficiently small neighborhood of $(\R \P^2, g_{std})$, then for any $g \in U$,
%for $\mu$ sufficiently small that 
\[
\omega_p(\R \P^2, g) \leq 2 \pi f(p) + 1
\]
Similarly, given such a neighborhood $U$, we can apply theorem \ref{theo:good.metric.final} to find $g_{\mu} \in U$ for all $\mu = \mu_0(2 \pi p + 2, U)$ and the above bound will also hold for $g = g_{\mu}$. \nl 
\indent To verify part \ref{enum:containment}, fix the metric $g_{\mu}$ as above and choose an $\mathcal{F}$ homotopy class with value $L_{AP}(\Pi) < 2 \pi f(p) + 2$. By theorem \ref{ChoManPWidths}, we have that 
\[
L_{AP}(\Pi) = \sum_{j = 1}^N a_j \cdot \ell_{g_{\mu}}(\sigma_j) \qquad a_j \in \Z^{>0}
\]
where the $\sigma_j$ are distinct, simple, and closed geodesics. Since $\ell(\sigma_j) < 2 \pi f(p) + 2$, by theorem \ref{theo:good.metric.final}, we conclude that $\sigma_j = \gamma_i$ for some $i \in \{1,2,3\}$ and hence $\ell(\sigma_j) \in \{\pi, \pi + \mu, \pi + 2 \mu\}$. This gives 
\[
L_{AP}(\Pi) = (n_1 + n_2 + n_3) \pi + \mu (n_2 + 2n_3) \qquad n_i \in \Z^{>0}
\]
To see that $n_1 + n_2 + n_3 \equiv 0 \mod 2$, we first rewrite the critical varifold as 
\[
L_{AP}(\Pi) = \sum_{j = 1}^3 b_j \cdot \ell_{g_{\mu}}(\sigma_j) 
\]
And it suffices to show that 
\[
\sum_{j = 1}^3 b_j \equiv 0 \mod 2
\]
To see this, note that the corresponding cycle mod $\Z_2$ must be trivial, i.e.
\[
\sum_{j = 1}^3 b_j \cdot [[\sigma_j]] \equiv 0 \in H_1(\R \P^2, \Z_2)
\]
This follows since $\sigma = \sum_{j=1}^3 b_j \cdot \sigma_j \in \mathbf{C}_{AP}(\mathcal{P}_{p})$, where $\mathbf{C}_{AP}(\mathcal{P}_{p})$ denotes the union of the Almgren--Pitts critical sets of all homotopy classes $\Pi \subseteq \mathcal{P}_{p}$. By definition, there exists, $\Pi \subseteq \mathcal{P}_p$ and $\Phi_i \in \Pi$ such that $\Phi_i: X_i \to \mathcal{Z}_1( \R \P^2, \Z_2)$ and $x_i \in X_i$ such that 
\[
\lim_{i \to \infty} \mathbf{F}(\Phi_i(x_i), \sigma) = 0
\]
Moreover, each $\Phi_i(x_i)$ is trivial homologically by definition of being an element of $\mathcal{Z}_1$. Convergence in the $\mathbf{F}$ norm preserves homology, so we conclude that $\sigma = 0 \in H_1(\R \P^2, \Z_2)$. \nl 
%
\begin{comment}
(\purple{This is homologically trivial because its the limit of images of a sweepout, $\Psi_i(t_i)$. But this limit converges in the flat norm, so homologically trivial is preserved because $\Psi_i(t_i)$ always homologically trivial}

(\red{Is this exactly true? I know for every element of the homotopy class, the map produces trivial elements of homology, but for the critical set, its not guaranteed. I guess limits of trivial elements of homology are trivial, though I'd have to think about it...}) \nl 
\end{comment}
%
\indent However, note that each individual $\sigma_j \not = 0 \in H_1(\R \P^2, \Z_2)$ - one can see this by considering the diffeomorphism $(\R \P^2, g_{\mu}) \to (\R \P^2, g_{std})$ which maps each $\sigma_j$ to one of the corresponding $3$ axial geodesics in $(\R \P^2, g_{std})$ (see the defining equation \eqref{ellipsoidEq} and then quotient), corresponding to $\gamma_j = \{x_j = 0\} \cap (\R \P^2, g_{std})$ for $j = 1,2,3$. However, each of these geodesics are homologically non-trivial.
%
\begin{comment}
\indent \red{Oh wait this is not true, I mean $H_1(S^2) = 0$ anyway, so all geodesics are trivial. Makes sense since they are all boundaries. It should just be that each $\sigma_i \not = 0 \in H_1(\R \P^2, \Z_2)$} We can lift $\sigma$ to $S^2$, and note that the lift is trivial in $H_1(S^2, \Z_2)$, i.e.
\[
\sum_{j = 1}^3 b_j \cdot [[\tilde{\sigma}_j]] \equiv 0 \in H_1(S^2, \Z_2)
\]
where $\tilde{\sigma}_j$ are the lifts of $\sigma_j$ to $S^2$, and by the proof of theorem \ref{theo:good.metric.final} (see also \cite[Thm 6.1]{chodosh2023p}), for $\mu_0$ sufficiently small, each $\tilde{\sigma}_j$ is simple, closed, and homologically non-trivial. Thus
\[
\sum_{j = 1}^3 b_j \cdot [[\tilde{\sigma}_j]] \equiv 0 \in H_1(S^2, \Z_2) \iff \sum_{j=1}^3 b_j \equiv 0
\]
\end{comment}
%
Thus, we conclude that 
\[
\sum_{j = 1}^3 b_j \cdot [[\sigma_j]] \equiv 0 \in H_1(\R \P^2, \Z_2) \iff \sum_{j=1}^3 b_j \equiv 0
\]
%Note that in the above, WLOG we have $a_j \in \{0,1\}$, and each
%, i.e. it suffices to consider the case when our cycle of interest
\end{proof}
\noindent Having proved lemma \ref{lem:perturbedMetrics} and theorem \ref{theo:good.metric.final}, we note that the below proposition from \cite{chodosh2023p} holds for $\R \P^2$ instead of $S^2$, with only minor notational changes.
\begin{proposition}[Prop 6.11, \cite{chodosh2023p}] \label{widthInequality}
Fix $p \in \mathbb{N}$ and $\mu_1(p) > 0$ as in lemma \ref{lem:perturbedMetrics}. Then 
\[
\omega_p(\R \P^2, g_{\mu}) < \omega_{p+1}(\R \P^2, g_{\mu})
\]
for every $\mu \in (0, \mu_1)$.
\end{proposition}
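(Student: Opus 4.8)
The plan is to transcribe the proof of \cite[Proposition~6.11]{chodosh2023p} from $S^2$ to $\R\P^2$: Theorem~\ref{theo:good.metric.final} and Lemma~\ref{lem:perturbedMetrics} were arranged precisely so that every hypothesis of that argument is available on $(\R\P^2, g_\mu)$, so the work is to check that nothing in it is special to $S^2$. Suppose, for contradiction, that $\omega_p(\R\P^2, g_\mu) = \omega_{p+1}(\R\P^2, g_\mu) =: W$ for some $\mu \in (0,\mu_1)$; by monotonicity of the widths this is the only way the claimed strict inequality can fail. By Lemma~\ref{lem:perturbedMetrics}, part~\ref{enum:upperBound}, $W = \omega_p \le 2\pi f(p) + 1$, and since $f(p) \le p$ this is strictly below the threshold $\Lambda$ with which $g_\mu$ was produced in the proof of Lemma~\ref{lem:perturbedMetrics} (there $\Lambda = 2\pi p + 2$). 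Hence conclusions (1)--(3) of Theorem~\ref{theo:good.metric.final} are all in force at mass level $W$: every closed connected geodesic of $g_\mu$ of length $<\Lambda$ is an iterate of one of $\gamma_1,\gamma_2,\gamma_3$, these geodesics and their iterates are isolated, and no element of $\cS^{\Lambda}(g_\mu)$ carries a not-everywhere-tangential stationary varifold Jacobi field.

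Next I would invoke the Almgren--Pitts/Marques--Neves min--max structure theory in the form assembled in \cite[\S\S 4--6]{chodosh2023p} --- working with the refined classes $\mathcal{P}^{\mathbf{F}}_{p,m}$, $m = 2p+1$, of Lemma~\ref{lemm:P.p.m}, so that the deformation and index theory is applicable. The relevant output is a Lusternik--Schnirelmann-type statement: if two consecutive $p$-widths of a closed surface coincide, $\omega_p = \omega_{p+1} = W$, then the min--max critical set at level $W$ is \emph{degenerate}. Concretely, one obtains a stationary integral varifold $V$ with $\|V\|(\R\P^2) = W$ that, by Theorem~\ref{ChoManPWidths} and part~(2) of Theorem~\ref{theo:good.metric.final}, is a union $V = \sum_i n_i [\gamma_i]$ with $n_i \in \N$ --- so $V \in \cS^{\Lambda}(g_\mu)$ --- and such that either $V$ belongs to a continuum of distinct stationary integral varifolds of mass $W$ in the relevant class, or $V$ carries a nontrivial not-everywhere-tangential stationary varifold Jacobi field (equivalently, has positive nullity). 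The places at which the $\R\P^2$ situation genuinely differs from $S^2$ --- namely that $\mathcal{Z}_1(\R\P^2;\Z_2)$ consists of boundaries even though the $\gamma_i$ are nonzero in $H_1(\R\P^2;\Z_2)$, which forces the total multiplicity $\sum_i n_i$ to be even --- were already handled in Lemma~\ref{lem:perturbedMetrics}, part~\ref{enum:containment}; nothing else in the reduction uses properties of $S^2$.

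Both alternatives are then excluded by Theorem~\ref{theo:good.metric.final}. A continuum is impossible because, by part~(2), any such varifold corresponds to a triple $(n_1,n_2,n_3) \in \N^3$ with $n_1\pi + n_2(\pi + \mu) + n_3(\pi + 2\mu) = W$, and since $W < \Lambda$ there are only finitely many such triples; so the set of mass-$W$ stationary integral varifolds in the class is finite. A Jacobi field is impossible because $V \in \cS^{\Lambda}(g_\mu)$ and part~(3) of Theorem~\ref{theo:good.metric.final} forbids one. In either case we reach a contradiction, so $\omega_p(\R\P^2, g_\mu) < \omega_{p+1}(\R\P^2, g_\mu)$.

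The only step with genuine content is the second one, extracting the degenerate varifold from the equality $\omega_p = \omega_{p+1}$; but this is not something I would reprove here. It is exactly the Lusternik--Schnirelmann mechanism of \cite[\S\S 4--6]{chodosh2023p}, built on \cite{marques2017existence}, \cite{marques2019equidistribution} and the Almgren--Pitts theory, and the role of Theorem~\ref{theo:good.metric.final} and Lemma~\ref{lem:perturbedMetrics} was precisely to guarantee that its hypotheses hold verbatim on $(\R\P^2, g_\mu)$. So in practice the proof is the short observation that the argument of \cite[\S 6]{chodosh2023p} transports, with $\R\P^2$ replacing $S^2$, to yield the stated strict monotonicity.
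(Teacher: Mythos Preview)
Your proposal is correct and takes the same approach as the paper: both defer to the argument of \cite[Proposition~6.11]{chodosh2023p} and observe that it transports verbatim to $(\R\P^2, g_\mu)$ once Theorem~\ref{theo:good.metric.final} and Lemma~\ref{lem:perturbedMetrics} are in hand. The paper in fact gives no proof beyond that one-line observation, so your sketch of the Lusternik--Schnirelmann mechanism and the way finiteness and nondegeneracy rule out the two alternatives is strictly more detail than the paper itself provides.
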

\section{Proof of Main Theorem \ref{mainTheorem}} \label{ProofMain}
\noindent We now compute all of the widths of $(\R \P^2, g_{std})$
\begin{theorem*}
For $(\R \P^2, g_{std})$, we have for any $p \in \mathbb{N}^+$,
\[
\omega_p = 2 \pi f(p) = 2 \pi \Big\lfloor \frac{1}{4} \left(1 + \sqrt{1 + 8p} \right) \Big\rfloor
\]
\end{theorem*}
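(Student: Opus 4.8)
The plan is to deduce the formula for $g_{std}$ from the corresponding statement for the perturbed metrics $g_\mu$ of Lemma~\ref{lem:perturbedMetrics}, and then let $\mu \to 0$. Concretely, I would show that for each fixed $p$ and all sufficiently small $\mu>0$ one has $\omega_p(\R\P^2,g_\mu) = 2\pi f(p) + \mu\, m_p(\mu)$ for some integer $m_p(\mu)$ with $0 \le m_p(\mu) \le 4 f(p)$. Since $g_\mu \to g_{std}$ in the $C^\infty$ topology (Theorem~\ref{theo:good.metric.final}) and $p$-widths depend continuously on the metric (\cite[Lemma 2.1]{irie2018density}), letting $\mu \to 0$ then yields $\omega_p(\R\P^2, g_{std}) = 2\pi f(p)$, after which a short computation converts $f(p)$ into the stated closed form.

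For the ``$\le$'' half of the claim about $g_\mu$: fix any index $q$ and $\mu \in (0,\mu_1(q))$. By Lemma~\ref{lem:perturbedMetrics}, part~\ref{enum:upperBound}, $\omega_q(g_\mu) \le 2\pi f(q)+1 < 2\pi f(q)+2$, so the width, being the $\mathbf{L}_{AP}$-value of an optimal $\mathcal F$-homotopy class, lies in the first part of the union in Lemma~\ref{lem:perturbedMetrics}, part~\ref{enum:containment}; that is, $\omega_q(g_\mu) = (n_1+n_2+n_3)\pi + (n_2+2n_3)\mu$ for nonnegative integers $n_i$ with $n_1+n_2+n_3$ even and positive. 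Writing $2k_q := n_1+n_2+n_3$ and $m_q := n_2+2n_3$, this reads $\omega_q(g_\mu) = 2\pi k_q + \mu\, m_q$ with $k_q \ge 1$ and $0 \le m_q \le 2(n_1+n_2+n_3) = 4k_q$. Since $\mu\, m_q \ge 0$ and $\omega_q(g_\mu) < 2\pi(f(q)+1)$, we obtain $k_q \le f(q)$. (For $\mu$ small relative to the finite range of indices used below, the exponent $k_q$ is moreover uniquely determined by $\omega_q(g_\mu)$, since $a\pi+b\mu = a'\pi+b'\mu$ with $a,a',b,b'$ in the relevant bounded ranges forces $a=a'$.)

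The crux is the counting step forcing $k_q = f(q)$. Fix $p$, put $\ell := f(p)$, and choose $\mu$ smaller than $\mu_1(q)$ for every $q \le D(\ell)-1$ and small enough that $4\ell\mu < 2\pi$. For such $\mu$, if $q<q'$ and $k_q > k_{q'}$ then $\omega_q(g_\mu) \ge 2\pi k_q \ge 2\pi(k_{q'}+1) > 2\pi k_{q'} + \mu m_{q'} = \omega_{q'}(g_\mu)$, contradicting the strict monotonicity $\omega_q(g_\mu) < \omega_{q'}(g_\mu)$ of Proposition~\ref{widthInequality}; hence $q \mapsto k_q$ is non-decreasing. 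Also, for each $j$ the widths with $k_q = j$ are distinct elements of the $(4j+1)$-element set $\{\,2\pi j + \mu m : 0 \le m \le 4j\,\}$, so at most $4j+1$ indices satisfy $k_q = j$. Now restrict to $q \in \{1,\dots,D(\ell)-1\}$, where $1 \le k_q \le f(q) \le f(D(\ell)-1) = \ell$. Since $\sum_{j=1}^{\ell}(4j+1) = 2\ell^2+3\ell = D(\ell)-1$, the pigeonhole is saturated: exactly $4j+1$ indices $q \le D(\ell)-1$ satisfy $k_q = j$ for each $j$, and by monotonicity these are precisely the $q \in \{D(j-1),\dots,D(j)-1\}$ --- i.e.\ $k_q = f(q)$ throughout that range, in particular $k_p = f(p) = \ell$. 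Hence $\omega_p(g_\mu) = 2\pi f(p) + \mu m_p$ with $0 \le m_p \le 4 f(p)$, as claimed.

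Letting $\mu \to 0$ and invoking continuity of $p$-widths gives $\omega_p(\R\P^2,g_{std}) = 2\pi f(p)$. Finally, $f(p) = d+1 \iff D(d) \le p \le D(d+1)-1 \iff 2d^2+3d+1 \le p \le 2d^2+7d+5$, and setting $t = \frac{1}{4}\left(1+\sqrt{1+8p}\right)$, so that $p = 2t^2 - t$ is increasing in $t$, shows this is equivalent to $d+1 \le t < d+2$; hence $f(p) = \left\lfloor \frac{1}{4}(1+\sqrt{1+8p})\right\rfloor$. I expect the genuinely hard inputs --- the rigidity of the geodesic and Jacobi-field picture for $g_\mu$ (Theorem~\ref{theo:good.metric.final}), the parity of the total multiplicity in Lemma~\ref{lem:perturbedMetrics}, part~\ref{enum:containment}, and strict monotonicity of the widths (Proposition~\ref{widthInequality}) --- to be exactly the portions already established; the one remaining delicate point in this argument is the numerical coincidence $D(\ell)-D(\ell-1) = 4\ell+1 = \#\{m : 0 \le m \le 4\ell\}$, which is precisely what makes the pigeonhole rigid enough to pin down every $\omega_p$ at once rather than merely up to bounded ambiguity.
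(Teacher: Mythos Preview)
Your proof is correct and follows essentially the same approach as the paper's. Both arguments feed the upper bound and parity constraint from Lemma~\ref{lem:perturbedMetrics} together with the strict monotonicity of Proposition~\ref{widthInequality} into the same counting identity $\sum_{j=1}^{\ell}(4j+1)=D(\ell)-1$, then pass to the limit $\mu\to 0$; you phrase the counting as a saturated pigeonhole yielding $k_q=f(q)$ directly, whereas the paper phrases it as the set equality $\{\omega_q(g_\mu):q\le D(d+1)-1\}=R$ followed by an induction on $d$, but these are two packagings of the identical argument.
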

\begin{proof}
%\red{Actually I think this will be different}
As in the proof of theorem \ref{SphereWidths} from \cite{chodosh2023p}, we will show that 
\[
\omega_{D(d)} = \omega_{D(d) + 1} = \cdots = \omega_{D(d+1) - 1} = 2 \pi (d+1)
\]
for all $d$. Let $\mu_1 = \mu_1(p = D(d+1) - 1) > 0$ as in lemma \ref{lem:perturbedMetrics} and suppose that $0 < \mu < \min(\mu_1, 1/2(d+1))$. \nl 
\indent As in \cite[Claim 7.1]{chodosh2023p}, we want to prove:
\begin{align*}
\{\omega_p(\R \P^2, g_{\mu}) \; : \; p = 1, \dots, D(d+1) - 1\} & = \Big( \{ n_1 \cdot \pi + n_2 \cdot (\pi + \mu) + n_3 \cdot (\pi + 2 \mu) \; \\
    &  \quad : \; (n_1,n_2,n_3) \in \N^3 \} \backslash \{0\},  \\
    & \quad n_1 + n_2 + n_3 \equiv 0 \mod 2 \} \Big) \cap (0, 2 \pi (d+1) + 1]
\end{align*}
Denote the set on the left as $L$ and on the right as $R$. $R \subseteq L$ follows directly from lemma \ref{lem:perturbedMetrics}. For the other containment, we note that by proposition \ref{widthInequality}:
\begin{align*}
\sharp \{\omega_p(\R \P^2, g_{\mu}) \; : \; p = 1, \dots, D(d+1) - 1\} & = D(d+1) - 1 = (d+2)(2d + 3) - 1 \\
&= 2d^2 + 7d + 5 = (2d + 5)(d+1)
\end{align*}
To compute $|R|$, we argue as in \cite[Claim 7.1]{chodosh2023p} and write
\begin{align*}
n_1 \cdot \pi + n_2 \cdot (\pi + \mu) + n_3 \cdot (\pi + 2 \mu) &= \pi \cdot (n_1 + n_2 + n_3) + \mu (n_2 + 2 n_3)
\end{align*}
And count the number of values of $\{\mu (n_2 + 2n_3) \; | \; n_1 + n_2 + n_3 = 2j\}$ where $0 \leq 2j \leq 2(d+1)$. However
\[
\{\mu (n_2 + 2n_3) \; | \; n_1 + n_2 + n_3 = 2j\} = \{0, \mu, 2 \mu, \dots, 4 j \mu \}
\]
and so 
\begin{align*}
|R| &= \sum_{j = 1}^{d+1} \sharp \{ \mu (n_2 + 2n_3) \; | \; n_1 + n_2 + n_3 = 2j\} \\
&= \sum_{j = 1}^{d+1} 4j + 1 \\
&= 4 \cdot \frac{(d+1)(d+2)}{2} + d+1 \\
&= (d+1) \cdot (2d + 5)
\end{align*}
And so necessarily we have $L = R$. \newline 
\indent Now by induction and the fact that the widths are increasing (proposition \ref{widthInequality}), we see that 
\begin{align*}
\forall p \in \{D(d), D(d) + 1, \dots, D(d+1) - 1\} \\
2\pi \cdot (d+1) \leq \omega_p(\R \P^2, g_{\mu}) \leq (2\pi + 4\mu ) \cdot (d+1)
\end{align*}
using continuity of the p-widths in the metrics and nothing $g_{\mu} \xrightarrow{\mu \to 0} g_{std}$, we conclude that 
\begin{equation} \label{widthEquation}
p \in \{D(d), D(d) + 1, \dots, D(d+1) - 1\} \implies \omega_p(\R \P^2, g_{std}) = 2 \pi \cdot (d+1) = 2\pi \cdot f(p)
\end{equation}
These values are achieved precisely by the sweepouts defined in \ref{lem:perturbedMetrics} by the upper bound computed there and equation \eqref{widthEquation}. The fact that $f(p) = \lfloor \frac{1}{4} \left(1 + \sqrt{1 + 8p} \right)\rfloor$ follows from algebraic manipulation.
%the size of the set on the left hand side is exactly $D(d+1) = $
%(\red{Okay here we need something new. One containment \textit{does not} follow from the lemma since geodesics on $\R \P^2$ are length $\pi$, not $2\pi$. Probably need to go back to the lemma and show that we must have even multiplicity. Makes sense, min-max geodesics in unoriented manifolds probably occur with even multiplicity. Until we have this, the counting argument won't hold})
\end{proof}

\bibliography{main}{}

\newcommand{\etalchar}[1]{$^{#1}$}
\providecommand{\bysame}{\leavevmode\hbox to3em{\hrulefill}\thinspace}
\providecommand{\MR}{\relax\ifhmode\unskip\space\fi MR }
% \MRhref is called by the amsart/book/proc definition of \MR.
\providecommand{\MRhref}[2]{%
  \href{http://www.ams.org/mathscinet-getitem?mr=#1}{#2}
}
\providecommand{\href}[2]{#2}
\begin{thebibliography}{FNT{\etalchar{+}}20}

\bibitem[Alm62]{almgren1962homotopy}
Frederick Almgren, \emph{The homotopy groups of the integral cycle groups}, Topology \textbf{1} (1962), no.~4, 257--299.

\bibitem[Alm65]{almgren1965theory}
\bysame, \emph{The theory of varifolds}, Mimeographed notes (1965).

\bibitem[AMN21]{ambrozio2021riemannian}
Lucas Ambrozio, Fernando~C Marques, and Andr{\'e} Neves, \emph{Riemannian metrics on the sphere with zoll families of minimal hypersurfaces}, arXiv preprint arXiv:2112.01448 (2021).

\bibitem[AMN24]{ambrozio2024rigidity}
\bysame, \emph{Rigidity theorems for the area widths of riemannian manifolds}, arXiv preprint arXiv:2408.14375 (2024).

\bibitem[Bes12]{besse2012manifolds}
Arthur~L Besse, \emph{Manifolds all of whose geodesics are closed}, vol.~93, Springer Science \& Business Media, 2012.

\bibitem[CM20]{chodosh2020minimal}
Otis Chodosh and Christos Mantoulidis, \emph{Minimal surfaces and the allen--cahn equation on 3-manifolds: index, multiplicity, and curvature estimates}, Annals of Mathematics \textbf{191} (2020), no.~1, 213--328.

\bibitem[CM23]{chodosh2023p}
\bysame, \emph{The p-widths of a surface}, Publications math{\'e}matiques de l'IH{\'E}S \textbf{137} (2023), no.~1, 245--342.

\bibitem[FNT{\etalchar{+}}20]{fraser2020applications}
Ailana Fraser, Andr{\'e} Neves, Peter~M Topping, Paul~C Yang, Fernando~C Marques, and Andr{\'e} Neves, \emph{Applications of min--max methods to geometry}, Geometric Analysis: Cetraro, Italy 2018 (2020), 41--77.

\bibitem[Fun11]{funk1911flachen}
Paul Funk, \emph{{\"U}ber fl{\"a}chen mit lauter geschlossenen geod{\"a}tischen linien}, W. Fr. Kaestner, 1911.

\bibitem[GG81]{gromoll1981metrics}
Detlef Gromoll and Karsten Grove, \emph{On metrics on s 2 all of whose geodesics are closed}, Inventiones mathematicae \textbf{65} (1981), no.~1, 175--177.

\bibitem[Gro02]{gromov2002isoperimetry}
Mikhail Gromov, \emph{Isoperimetry of waists and concentration of maps}, Tech. report, SIS-2003-246, 2002.

\bibitem[Gro06]{gromov2006dimension}
\bysame, \emph{Dimension, non-linear spectra and width}, Geometric Aspects of Functional Analysis: Israel Seminar (GAFA) 1986--87, Springer, 2006, pp.~132--184.

\bibitem[Gro10]{gromov2010singularities}
\bysame, \emph{Singularities, expanders and topology of maps. part 2: From combinatorics to topology via algebraic isoperimetry}, Geometric and Functional Analysis \textbf{20} (2010), no.~2, 416--526.

\bibitem[Gui76]{guillemin1976radon}
Victor Guillemin, \emph{The radon transform on zoll surfaces}, Advances in Mathematics \textbf{22} (1976), no.~1, 85--119.

\bibitem[Gut09]{guth2009minimax}
Larry Guth, \emph{Minimax problems related to cup powers and steenrod squares}, Geometric And Functional Analysis \textbf{18} (2009), 1917--1987.

\bibitem[IMN18]{irie2018density}
Kei Irie, Fernando Marques, and Andr{\'e} Neves, \emph{Density of minimal hypersurfaces for generic metrics}, Annals of Mathematics \textbf{187} (2018), no.~3, 963--972.

\bibitem[LMN18]{liokumovich2018weyl}
Yevgeny Liokumovich, Fernando Marques, and Andr{\'e} Neves, \emph{Weyl law for the volume spectrum}, Annals of Mathematics \textbf{187} (2018), no.~3, 933--961.

\bibitem[MK24]{marx2024isospectral}
Jared Marx-Kuo, \emph{The isospectral problem for p-widths: An application of zoll metrics}, arXiv preprint arXiv:2405.19719 (2024).

\bibitem[MKSS24]{marx2024index}
Jared Marx-Kuo, Lorenzo Sarnataro, and Douglas Stryker, \emph{Index, intersections, and multiplicity of min-max geodesics}, arXiv preprint arXiv:2410.02580 (2024).

\bibitem[MMN23]{marques2023morse}
Fernando~C Marques, Rafael Montezuma, and Andr{\'e} Neves, \emph{Morse inequalities for the area functional}, Journal of Differential Geometry \textbf{124} (2023), no.~1, 81--111.

\bibitem[MN14]{marques2014min}
Fernando~C Marques and Andr{\'e} Neves, \emph{Min-max theory and the willmore conjecture}, Annals of mathematics (2014), 683--782.

\bibitem[MN15]{marques2015morse}
\bysame, \emph{Morse index and multiplicity of min-max minimal hypersurfaces}, arXiv preprint arXiv:1512.06460 (2015).

\bibitem[MN17]{marques2017existence}
\bysame, \emph{Existence of infinitely many minimal hypersurfaces in positive ricci curvature}, Inventiones mathematicae \textbf{209} (2017), no.~2, 577--616.

\bibitem[MN20]{MN2020CETSurvey}
\bysame, \emph{Applications of min--max methods to geometry}, pp.~41--77, Springer International Publishing, Cham, 2020.

\bibitem[MN21]{marques2021morse}
\bysame, \emph{Morse index of multiplicity one min-max minimal hypersurfaces}, Advances in Mathematics \textbf{378} (2021), 107527.

\bibitem[MNS19]{marques2019equidistribution}
Fernando~C Marques, Andr{\'e} Neves, and Antoine Song, \emph{Equidistribution of minimal hypersurfaces for generic metrics}, Inventiones mathematicae \textbf{216} (2019), no.~2, 421--443.

\bibitem[Mor34]{Morse:calculus.variations}
Marston Morse, \emph{The calculus of variations in the large}, vol.~18, American Mathematical Soc., 1934.

\bibitem[Pit74]{pitts1973net}
Jon~T. Pitts, \emph{Regularity and singularity of one dimensional stationary integral varifolds on manifolds arising from variational methods in the large}, Symposia {M}athematica, {V}ol. {XIV} ({C}onvegno di {G}eometria {S}implettica e {F}isica {M}atematica \& {C}onvegno di {T}eoria {G}eometrica dell'{I}ntegrazione e {V}ariet\`a{} {M}inimali, {INDAM}, {R}ome, 1973), Istituto Nazionale di Alta Matematica, Rome, 1974, pp.~465--472.

\bibitem[Pit14]{pitts2014existence}
Jon~T Pitts, \emph{Existence and regularity of minimal surfaces on riemannian manifolds.(mn-27)}, vol.~27, Princeton University Press, 2014.

\bibitem[Son18]{song2018existence}
Antoine Song, \emph{Existence of infinitely many minimal hypersurfaces in closed manifolds}, arXiv preprint arXiv:1806.08816 (2018).

\bibitem[Yau82]{yau1982problem}
Shing~Tung Yau, \emph{Problem section}, Seminar on differential geometry, vol. 102, 1982, pp.~669--706.

\bibitem[Zho20]{zhou2020multiplicity}
Xin Zhou, \emph{On the multiplicity one conjecture in min-max theory}, Annals of Mathematics \textbf{192} (2020), no.~3, 767--820.

\bibitem[Zol01]{zoll1901ueber}
Otto Zoll, \emph{Ueber fl{\"a}chen mit scharen von geschlossenen geod{\"a}tischen linien}, Dieterich, 1901.

\end{thebibliography}
\bibliographystyle{amsalpha}
\end{document}